\documentclass[final,reqno]{siamltex}
\usepackage{latexsym,amsmath,amssymb,amsfonts,mathrsfs}
\usepackage{epsf,graphicx,epsfig,color,cite,cases}
\usepackage{subfigure,graphics,multirow,marginnote,enumerate,bm}
\sloppy \brokenpenalty=10000

\newcommand{\R}{{\mat R}}

\newcommand{\C}{{\mat C}}
\newcommand{\E}{{\mat E}}

\newcommand{\ds}{\displaystyle}
\newcommand{\no}{\nonumber}
\newcommand{\be}{\begin{eqnarray}}
\newcommand{\ben}{\begin{eqnarray*}}
\newcommand{\en}{\end{eqnarray}}
\newcommand{\enn}{\end{eqnarray*}}

\newcommand{\pa}{\partial}

\newcommand{\ov}{\overline}
\newcommand{\I}{{\rm Im}}
\newcommand{\Rt}{{\rm Re}}
\newcommand{\curl}{{\rm curl}}
\newcommand{\Curl}{{\rm Curl}}

\newcommand{\Dive}{{\rm Div}}
\newcommand{\g}{\gamma}
\newcommand{\G}{\Gamma}

\newcommand{\vep}{\varepsilon}
\newcommand{\Om}{\Omega}
\newcommand{\om}{\omega}
\newcommand{\sig}{\sigma}

\newcommand{\al}{\alpha}
\newcommand{\bt}{\beta}

\newcommand{\ti}{\times}

\newcommand{\wid}{\widetilde}

\newcommand{\na}{\nabla}
\newcommand{\mat}{\mathbb}
\newcommand{\se}{\setminus}
\newcommand{\ify}{\infty}

\newcommand{\la}{\lambda}

\newcommand{\les}{\lesssim}
\newcommand{\ch}{\check}
\newcommand{\V}{\Vert}

\newcommand{\0}{\bm{0}}

\newcommand{\on}{\text{on}}
\newcommand{\gin}{\text{in}}
\newcommand{\PML}{{\rm PML}}

\newtheorem{remark}[theorem]{Remark}

\begin{document}
\renewcommand{\theequation}{\arabic{section}.\arabic{equation}}

\title{\bf
Convergence analysis of the PML method for time-domain electromagnetic scattering problems}
\author{Changkun Wei\thanks{Academy of Mathematics and Systems Science, Chinese Academy of Sciences,
Beijing 100190, China  and School of Mathematical Sciences, University of Chinese
Academy of Sciences, Beijing 100049, China ({\tt weichangkun@amss.ac.cn})}
\and
Jiaqing Yang\thanks{School of Mathematics and Statistics, Xi'an Jiaotong University,
Xi'an, Shaanxi, 710049, China ({\tt jiaq.yang@mail.xjtu.edu.cn; jiaqingyang@amss.ac.cn})}
\and
Bo Zhang\thanks{NCMIS, LSEC, and Academy of Mathematics and Systems Sciences, Chinese Academy of Sciences,
Beijing, 100190, China and School of Mathematical Sciences, University of Chinese Academy of Sciences,
Beijing 100049, China ({\tt b.zhang@amt.ac.cn})}
}
\date{}
\maketitle


\begin{abstract}
In this paper, a perfectly matched layer (PML) method is proposed to solve the
time-domain electromagnetic scattering problems in 3D effectively.
The PML problem is defined in a spherical layer
and derived by using the Laplace transform and real coordinate stretching in the frequency domain.
The well-posedness and the stability estimate of the PML problem are first proved based on the Laplace
transform and the energy method. The exponential convergence of the PML method is then established
in terms of the thickness of the layer and the PML absorbing parameter.
As far as we know, this is the first convergence result for the time-domain PML method for
the three-dimensional Maxwell equations.
Our proof is mainly based on the stability estimates of solutions of the truncated PML problem
and the exponential decay estimates of the stretched dyadic Green's function for the Maxwell
equations in the free space.
\end{abstract}

\begin{keywords}
Well-posedness, stability, time-domain electromagnetic scattering, PML, exponential convergence
\end{keywords}

\begin{AMS}
65N30, 65N50
\end{AMS}

\pagestyle{myheadings}
\thispagestyle{plain}
\markboth{C. Wei, J. Yang and B. Zhang}{Convergence of the PML method for time-domain electromagnetic scattering}

\section{Introduction}
\setcounter{equation}{0}

In this paper, we consider time-domain electromagnetic scattering problems by a perfectly conducting
obstacle, of which the well-posedness and stability of solutions have been established in \cite{chen2008}.
The purpose of this paper is to propose a perfectly matched layer (PML) method for
solving the time-domain electromagnetic scattering problem effectively.

Recently, time-dependent scattering problems have attracted much attention due to their
capability of capturing wide-band signals and modeling more general materials and
nonlinearity \cite{Li2012,Chen2014,Wang2012}.
For example, the well-posedness and stability analysis can be found
in \cite{chen2008,LLA2015,GL2016,GL2017} for time-domain electromagnetic scattering problems
by bounded obstacles, diffraction gratings and unbounded structures,
and in \cite{BGL2018,Hsiao2015,GLZ2017,wy2019} for acoustic-elastic interaction problems,
including the case of bounded elastic bodies in a locally perturbed half-space
and the case of unbounded layered structures.

The PML method was originally proposed by B\'erenger in 1994 for solving the time-dependent Maxwell's
equations \cite{Berenger1994}. The purpose of the PML method is to surround the computational domain
with a specially designed medium in a finite thickness layer in which the scattered waves decay
rapidly regardless of the wave incident angle, thereby greatly reducing the computational complexity
of the scattering problem. Since then, a large amount of work have been done on the construction
of various structures of PML absorbing layers for solving scattering problems
(see, e.g., \cite{Bramble2007,TURKEL1998,Collino1998,DeHoop2001,TC2001,DJ2006}).
On the other hand, convergence analysis of the PML method has also been studied by
many authors for time-harmonic scattering problems. For example, the exponential convergence has been established in terms of the thickness of the PML layer in \cite{Lassas1998,HSZ2003,CZ2010,BP2013} for
time-harmonic acoustic scattering problems, and in \cite{BW2005,Bramble2007,BP2008,BP2012,CZ2017,LWZ2011}
for time-harmonic electromagnetic scattering problems
including the two-layer medium case \cite{CZ2017} and the case with unbounded surfaces \cite{LWZ2011}.
There are also some work on the adaptive PML finite element method which provides
a complete numerical strategy for solving unbounded scattering problems within the framework
of the finite element method \cite{CW2003,CC2007,CL2005,CW2005}.

Compared with the time-harmonic PML method, very few theoretical results are available for
the analysis of the time-domain PML method for time-domain scattering problems.
For the time-domain acoustic scattering problems in 2D, the exponential convergence of a circular
PML method was proved in \cite{chen2009} in terms of the thickness and absorbing parameters of
the PML layer, based on the exponential decay of the modified Bessel function.
An uniaxial PML method was proposed in \cite{CW2012} for time-domain acoustic scattering problems
in two dimensions, based on the Laplace transform and complex coordinate stretching in the frequency
domain, and and its exponential convergence was also established in terms of the thickness and
absorbing parameters of the PML layer. In addition, the well-posedness and stability estimates
of the time-domain PML method have been proved in \cite{BGL2018} for the two-dimensional
acoustic-elastic interaction problems.
To the best of our knowledge, no theoretical analysis result is available so far
for the time-domain PML method for the three-dimensional electromagnetic scattering problems.

The purpose of this paper is to provide a theoretical study of the time-domain PML method
for the three-dimensional electromagnetic scattering problems, including its well-posedness and
stability as well as its exponential convergence in terms of the thickness and absorbing parameters
of the PML layer. Different from the complex coordinate stretching technique based on the Laplace
transform variable $s^{-1}$ in \cite{chen2009,CW2012}, we construct the PML layer by using
a real coordinate stretching technique associated with $[\Rt(s)]^{-1}$ in the frequency domain.
The existence, uniqueness and stability estimates of the PML problem are first established,
based on the Laplace transform and the energy method.
By analyzing the exponential decay properties of the stretched dyadic Green's function in the PML layer
in conjunction with the well-posedness of solutions of the truncated PML problem,
the exponential convergence of the PML method is then proved in terms of the thickness and absorbing
parameters of the PML layer.

The remaining part of this paper is as follows. In Section \ref{sec2}, we introduce some basic tools
including the Laplace transform and some Sobolev spaces needed in this paper.
The time-domain electromagnetic scattering problem is presented in Section \ref{sec3},
including the well-posedness of the problem and some properties of the transparent boundary
condition (TBC) established in \cite{chen2008}. Section \ref{sec4} is devoted to the well-posedness
and stability estimates of the truncated PML problem. The exponential convergence of the PML method
is established in Section \ref{sec5}, while some conclusions are given in Section \ref{sec6}.

\section{The Laplace transform and Sobolev spaces}\label{sec2}
\setcounter{equation}{0}

In this section we introduce the Laplace transform and the Sobolev spaces needed in this paper.

\subsection{The Laplace transform}

For each $s=s_1+is_2\in\C_+$ with $s_1>0,\;s_2\in\R$, the Laplace transform of the vector field
$\bm{u}(t)$ is defined as
\ben
\ch{\bm{u}}(s)=\mathscr{L}(\bm{u})(s)=\int_0^{\infty}e^{-st}\bm{u}(t)dt.
\enn
It is easy to verify that the Laplace transform has the following properties:
\be\label{A.1}
\mathscr{L}(\bm{u}_t)(s)&=&s\mathscr{L}(\bm{u})(s)-\bm{u}(0),\\ \label{A.2}
\int_0^t\bm{u}(\tau)d\tau&=&\mathscr{L}^{-1}(s^{-1}\ch{\bm{u}})(t),
\en
where $\mathscr{L}^{-1}$ denotes the inverse Laplace transform.

Now, by the definition of the Fourier transform we have that for any $s_1>0$,
\ben
\mathscr{F}(\bm{u}(\cdot)e^{-s_1\cdot})(s_2)&=&\int_{-\infty}^{+\infty}\bm{u}(t)e^{-s_1t}e^{-is_2t}dt
=\int_{0}^{\infty}\bm{u}(t)e^{-(s_1+is_2)t}dt\\
&=&\mathscr{L}(\bm{u})(s_1+is_2),\;\;\;s_2\in\R.
\enn
Then it follows by the formula of the inverse Fourier transform that for any $s_1>0$,
\ben
\bm{u}(t)e^{-s_1t}=\mathscr{F}^{-1}\{\mathscr{F}(\bm{u}(\cdot)e^{-s_1\cdot})\}
=\mathscr{F}^{-1}\Big(\mathscr{L}(\bm{u}(s_1+is_2))\Big),
\enn
that is,
\be\label{A.4}
\bm{u}(t)=\mathscr{F}^{-1}\Big(e^{s_1t}\mathscr{L}(\bm{u}(s_1+is_2))\Big),\;\;s_1>0,
\en
where $\mathscr{F}^{-1}$ denotes the inverse Fourier transform with respect to $s_2$.

By (\ref{A.4}), the Plancherel or Parseval identity for the Laplace transform can be obtained
(see \cite[(2.46)]{Cohen2007}).

\begin{lemma}{\rm (The Parseval identity).}
If $\ch{\bm{u}}=\mathscr{L}(\bm{u})$ and $\ch{\bm{v}}=\mathscr{L}(\bm{v})$, then
\be\label{A.5}
\frac{1}{2\pi}\int_{-\ify}^{\ify}\ch{\bm{u}}(s)\cdot\ch{\bm{v}}(s)ds_2
=\int_0^{\ify}e^{-2s_1t}\bm{u}(t)\cdot\bm{v}(t)dt
\en
for all $s_1>\la$, where $\la$ is the abscissa of convergence for the Laplace transform
of $\bm{u}$ and $\bm{v}$.
\end{lemma}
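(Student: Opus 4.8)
The plan is to derive the Parseval identity for the Laplace transform by reducing it to the classical Plancherel identity for the Fourier transform, using relation (\ref{A.4}) established just above. The key observation is that for a fixed $s_1>\la$, the map $t\mapsto \bm{u}(t)e^{-s_1t}$ (extended by zero for $t<0$) lies in $L^2(\R)$, so its Fourier transform in the variable $s_2$ is well-defined and the standard Plancherel theorem applies.

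First I would fix $s_1>\la$ and set $\bm{U}(t):=\bm{u}(t)e^{-s_1t}$ and $\bm{V}(t):=\bm{v}(t)e^{-s_1t}$, both understood to vanish for $t<0$ (causality). From the computation preceding (\ref{A.4}) we have $\mathscr{F}(\bm{U})(s_2)=\ch{\bm{u}}(s_1+is_2)$ and likewise $\mathscr{F}(\bm{V})(s_2)=\ch{\bm{v}}(s_1+is_2)$. Then I would invoke the classical Parseval/Plancherel identity for the Fourier transform, in the bilinear form
\ben
\frac{1}{2\pi}\int_{-\ify}^{\ify}\mathscr{F}(\bm{U})(s_2)\cdot\overline{\mathscr{F}(\bm{V})(s_2)}\,ds_2=\int_{-\ify}^{\ify}\bm{U}(t)\cdot\overline{\bm{V}(t)}\,dt,
\enn
and since $\bm{u},\bm{v}$ are real-valued vector fields one has $\bm{V}=\overline{\bm{V}}$; substituting the expressions for $\bm{U},\bm{V}$ and their transforms, the left side becomes $\frac{1}{2\pi}\int_{-\ify}^{\ify}\ch{\bm{u}}(s)\cdot\ch{\bm{v}}(s)\,ds_2$ and the right side becomes $\int_{-\ify}^{\ify}\bm{u}(t)\bm{v}(t)e^{-2s_1t}\,dt=\int_0^{\ify}e^{-2s_1t}\bm{u}(t)\cdot\bm{v}(t)\,dt$, where the lower limit drops to $0$ because $\bm{u}(t)=\bm{0}$ for $t<0$. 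This is exactly (\ref{A.5}).

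The one point requiring care — and the main (minor) obstacle — is the justification that $\bm{U},\bm{V}\in L^2(\R)$, i.e. that the integrals on the right of (\ref{A.5}) converge, which is precisely where the hypothesis $s_1>\la$ enters: by definition of the abscissa of convergence $\la$, the Laplace integrals of $\bm{u}$ and $\bm{v}$ converge absolutely for $\Rt(s)>\la$, and a standard argument shows $e^{-s_1t}\bm{u}(t)\in L^2(0,\ify)$ for $s_1>\la$. Once this integrability is in hand, the identity is just a change of variables in the Fourier Plancherel theorem; no further estimates are needed. I would simply cite \cite[(2.46)]{Cohen2007} for the classical statement and present the short reduction above, rather than reproving the Fourier Plancherel identity itself.
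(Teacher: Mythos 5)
Your overall route---reduce (\ref{A.5}) to the classical Fourier--Plancherel theorem through the relation $\mathscr{F}(\bm u(\cdot)e^{-s_1\cdot})(s_2)=\ch{\bm u}(s_1+is_2)$ from (\ref{A.4})---is exactly the reduction the paper has in mind (its ``proof'' is the one-line citation of \cite[(2.46)]{Cohen2007} right after (\ref{A.4})). The flaw is in your last identification step. Realness of $\bm v$ lets you drop the conjugate on the \emph{time} side ($\bm V=\overline{\bm V}$), not on the frequency side: for real $\bm v$ one has $\overline{\mathscr{F}(\bm V)(s_2)}=\overline{\ch{\bm v}(s_1+is_2)}=\ch{\bm v}(s_1-is_2)=\ch{\bm v}(\bar s)$, which is not $\ch{\bm v}(s)$. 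What Plancherel actually yields is the sesquilinear identity
\ben
\frac{1}{2\pi}\int_{-\ify}^{\ify}\ch{\bm u}(s)\cdot\overline{\ch{\bm v}(s)}\,ds_2
=\int_0^{\ify}e^{-2s_1t}\bm u(t)\cdot\bm v(t)\,dt,
\enn
and this cannot be upgraded to the unconjugated product on the left of (\ref{A.5}): take $u=v=e^{-t}$ (scalar), so $\ch u(s)=1/(s+1)$; then $\frac{1}{2\pi}\int_{-\ify}^{\ify}(s_1+1+is_2)^{-2}ds_2=0$ while $\int_0^{\ify}e^{-2s_1t}e^{-2t}dt=\frac{1}{2(s_1+1)}>0$. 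So the identity must be read (and is in fact applied later in the paper, e.g.\ in Lemma \ref{lem2.2} and Theorem \ref{thm3.3}, where it is used with $\bar{\bm\om}$ and with squared norms) in the conjugated form $\ch{\bm u}(s)\cdot\overline{\ch{\bm v}(s)}$, equivalently $\ch{\bm u}(s)\cdot\ch{\bm v}(\bar s)$. Your argument proves precisely that version once you stop discarding the conjugate; as written, the step ``the left side becomes $\frac{1}{2\pi}\int\ch{\bm u}(s)\cdot\ch{\bm v}(s)\,ds_2$'' fails.

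A secondary point: $s_1>\la$ with $\la$ the abscissa of convergence does not by itself give absolute convergence of the Laplace integrals, nor $\bm u(\cdot)e^{-s_1\cdot}\in L^2(0,\ify)$ (the abscissa of absolute convergence can be strictly larger, and neither controls the $L^2$ property). To run the Plancherel step you should simply assume, as the paper implicitly does for its causal fields, that $e^{-s_1t}\bm u$ and $e^{-s_1t}\bm v$ belong to $L^2(0,\ify)$ for the relevant $s_1$; with that hypothesis the rest of your reduction is legitimate.
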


The following lemma was proved in \cite{treves1975} (see \cite[Theorem 43.1]{treves1975}).

\begin{lemma}{\rm \cite[Theorem 43.1]{treves1975}.}
Let $\ch{\bm{\om}}(s)$ denote a holomorphic function in the half complex plane $s_1=\Rt(s)>\sig_0$
for some $\sig_0\in\R$, valued in the Banach space $\E$. Then the following statements are equivalent:
\begin{enumerate}[1)]
\item there is a distribution $\om\in\mathcal{D}_+^{'}(\E)$ whose Laplace transform is equal
to $\ch{\bm{\om}}(s)$, where $\mathcal{D}_+^{'}(\E)$ is the space of distributions on the real line which vanish identically
in the open negative half-line;
\item there is a $\sig_1$ with $\sig_0\leq\sig_1<\ify$ and an integer $m\geq0$ such that for
all complex numbers $s$ with $s_1=\Rt(s)>\sig_1$ it holds that $\V\ch{\bm{\om}}(s)\V_{\E}\les(1+|s|)^m$.
\end{enumerate}
\end{lemma}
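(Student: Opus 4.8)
\medskip\noindent\emph{Proof proposal.}
My plan is to prove the two implications separately. For $1)\Rightarrow2)$ I would invoke the structure theorem for tempered distributions. Fix $\sig>\sig_0$; then $e^{-\sig t}\om$ is a tempered distribution supported in $[0,\ify)$, hence of the form $\pa_t^k F$ for some continuous $\E$-valued $F$ vanishing on $(-\ify,0)$ with $\V F(t)\V_\E\les(1+t)^N$. Writing $\ch{\bm{\om}}(s)=\langle e^{-\sig t}\om,\,e^{-(s-\sig)t}\rangle$ for $\Rt(s)>\sig$ and integrating by parts $k$ times gives $\ch{\bm{\om}}(s)=(s-\sig)^k\langle F,e^{-(s-\sig)t}\rangle$, so that for $\Rt(s)>\sig_1:=\sig+1$,
\[
\V\ch{\bm{\om}}(s)\V_\E\les|s-\sig|^k\int_0^\ify(1+t)^N e^{-(\Rt(s)-\sig)t}\,dt\les(1+|s|)^k,
\]
because the last integral is bounded uniformly for $\Rt(s)-\sig\ge1$. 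Hence $2)$ holds with $m=k$.

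The direction $2)\Rightarrow1)$ is the substantive one, and the idea is to recover $\om$ by a regularized inverse Laplace transform along a vertical line. Given $\V\ch{\bm{\om}}(s)\V_\E\les(1+|s|)^m$ on $\Rt(s)>\sig_1$, set $g(s):=\ch{\bm{\om}}(s)\,(s+1)^{-(m+2)}$, which is holomorphic on $\Rt(s)>\sig_1$ and satisfies $\V g(s)\V_\E\les(1+|s|)^{-2}$ there, so that integrals of $g$ along vertical lines converge absolutely. For $\sig>\sig_1$ put
\[
G_\sig(t):=\frac{1}{2\pi}\int_{-\ify}^{\ify}e^{(\sig+i\eta)t}\,g(\sig+i\eta)\,d\eta,
\]
a continuous $\E$-valued function of $t$. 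Two facts then need to be checked. First, $G_\sig$ is independent of $\sig$: apply Cauchy's theorem to the rectangle with vertical sides $\Rt(s)=\sig$ and $\Rt(s)=\sig'$ and horizontal sides $\I(s)=\pm R$, and let $R\to\ify$; the horizontal contributions vanish thanks to the $(1+|s|)^{-2}$ decay of $g$, giving $G_\sig=G_{\sig'}=:G$. Second, $G(t)=0$ for $t<0$: for such $t$ one has $\V G(t)\V_\E\le\frac{e^{\sig t}}{2\pi}\int_{-\ify}^{\ify}\V g(\sig+i\eta)\V_\E\,d\eta\les e^{\sig t}$ uniformly in $\sig>\sig_1$ (using $|\sig+i\eta|\ge|\eta|$), and letting $\sig\to+\ify$ forces $G(t)=0$; by continuity $G(0)=0$ as well.

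It follows that $G\in\mathcal{D}_+^{'}(\E)$, and therefore so is $\om:=(\pa_t+1)^{m+2}G$. To conclude I would apply Fourier inversion to $G(\cdot)e^{-\sig\cdot}$, exactly as in the derivation of (\ref{A.4}), obtaining $\mathscr{L}(G)(\sig+i\eta)=g(\sig+i\eta)$, i.e. $\mathscr{L}(G)=g$ on $\Rt(s)>\sig_1$; since $G$ is continuous, vanishes on $(-\ify,0)$ and $G(0)=0$, identity (\ref{A.1}) gives $\mathscr{L}(\pa_t^j G)(s)=s^j\mathscr{L}(G)(s)$ in $\mathcal{D}_+^{'}(\E)$, whence $\mathscr{L}(\om)(s)=(s+1)^{m+2}g(s)=\ch{\bm{\om}}(s)$, as required.

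The step I expect to be the main obstacle is the analytic bookkeeping in $2)\Rightarrow1)$: one must use the polynomial estimate on the \emph{whole} half-plane $\Rt(s)>\sig_1$ (not merely near a single vertical line) in order to kill the horizontal pieces of the rectangle contour, and one needs the bound on $\V g(\sig+i\eta)\V_\E$ to be integrable in $\eta$ \emph{uniformly} in $\sig$ so that $\sig\to+\ify$ is legitimate. Once the regularization by $(s+1)^{-(m+2)}$ is in place, the remaining steps---Fourier inversion and differentiation in $\mathcal{D}_+^{'}(\E)$---are routine.
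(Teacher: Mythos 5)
The paper offers no proof of this lemma: it is quoted directly from Tr\`eves \cite[Theorem 43.1]{treves1975}, so there is no in-paper argument to compare against. Your reconstruction is the standard Paley--Wiener--Schwartz-type proof (and is essentially the one in Tr\`eves): the implication from 1) to 2) via the structure theorem for the tempered distribution $e^{-\sig t}\bm\om$, and the converse via regularization by a negative power of an affine function of $s$, absolute convergence on vertical lines, contour shifting to show independence of $\sig$, and the limit $\sig\to+\ify$ to force vanishing for $t<0$. The logic is sound and the key analytic points you flag (uniformity of the polynomial bound over the whole half-plane, and $\eta$-integrability of $\V g(\sig+i\eta)\V_{\E}$ uniformly in $\sig$) are exactly the ones that matter. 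Two small repairs are needed. First, $(s+1)^{-(m+2)}$ has a pole at $s=-1$, which lies \emph{inside} the half-plane $\Rt(s)>\sig_1$ whenever $\sig_1<-1$; replace the regularizing factor by $(s-\sig_1+1)^{-(m+2)}$, or observe that one may assume $\sig_1\ge 0$ without loss of generality, since enlarging $\sig_1$ only shrinks the half-plane on which $\mathscr{L}(\om)=\ch{\bm\om}$ is verified and holomorphy extends the identity. Second, in the direction from 1) to 2), the structure-theorem representative $F$ of $e^{-\sig t}\bm\om$ is not automatically supported in $[0,\ify)$: you only know $\pa_t^kF=0$ on $(-\ify,0)$, so $F$ coincides there with a polynomial of degree $<k$, which must be subtracted off (this does not change $\pa_t^kF$) before you may write the pairing as an integral over $(0,\ify)$. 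With these adjustments the proof is complete.
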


\subsection{Sobolev spaces}

For a bounded domain $D\subset\R^3$ with Lispchitz continuous boundary $\Sigma$, the Sobolev space
$H(\curl,D)$ is defined by
\ben
H(\curl,D):=\{\bm{u}\in L^2(D)^3:\;\nabla\times\bm{u}\in L^2(D)^3\}
\enn
which is a Hilbert space equipped with the norm
\ben
\|\bm{u}\|_{H(\curl,D)}=\left(\|\bm{u}\|^2_{L^2(D)^3}+\|\nabla\times\bm{u}\|^2_{L^2(D)^3}\right)^{1/2}.
\enn

Denote by $\bm{u}_{\Sigma}=\bm{n}\times(\bm u\times\bm{n})|_\Sigma$ the tangential component of $\bm u$
on $\Sigma$, where $\bm n$ denotes the unit outward normal vector on $\Sigma$.
By \cite{BCS2002} we have the following bounded surjective trace operators:
\ben
&&\g:\;H^1(D)\rightarrow H^{1/2}(\Sigma),\quad\g\varphi=\varphi\quad\on\;\;\Sigma,\\
&&\g_t:\;H(\curl,D)\rightarrow H^{-1/2}(\Dive,\Sigma),\quad\g_t\bm u=\bm u\times\bm{n}\quad\on\;\;\Sigma,\\
&&\g_T:\;H(\curl,D)\rightarrow H^{-1/2}(\Curl,\Sigma),\quad
\g_T\bm u=\bm{n}\times(\bm u\times\bm{n})\quad\on\;\;\Sigma,
\enn
where $\g_t$ and $\g_T$ are known as the tangential trace and tangential components trace operators,
and $\Dive$ and $\Curl$ denote the surface divergence and surface scalar curl operators,
respectively (for the detailed definition of $H^{-1/2}(\Dive,\Sigma)$ and $H^{-1/2}(\Curl,\Sigma)$,
we refer to \cite{BCS2002}). By \cite{BCS2002} again we know that $H^{-1/2}(\Dive,\Sigma)$ and
$H^{-1/2}(\Curl,\Sigma)$ form a dual pairing satisfying the integration by parts formula
\be\label{curl_form}
(\bm{u},\nabla\times\bm{v})_{D}-(\nabla\times\bm{u},\bm{v})_{D}
=\left\langle\g_{t}\bm{u},\g_{T}\bm{v}\right\rangle_{\Sigma}\quad\forall\;\bm{u},\bm{v}\in\bm{H}(\curl,D),
\en
where $(\cdot,\cdot)_{D}$ and $\langle\cdot,\cdot\rangle_{\Sigma}$ denote the $L^2$-inner product
on $D$ and the dual product between $H^{-1/2}(\Dive,\Sigma)$ and $H^{-1/2}(\Curl,\Sigma)$, respectively.

For any $S\subset\Sigma$, the subspace with zero tangential trace on $S$ is denoted as
\ben
H_S(\curl,D):=\left\{\bm{u}\in H(\curl,D):\g_{t}\bm{u}=0\;\;\text{on}\;\;S\right\}.
\enn
In particular, if $S=\Sigma$ then we write $H_{0}(\curl,D):=H_{\Sigma}(\curl,D)$.

\section{The scattering problem}\label{sec3}
\setcounter{equation}{0}

We consider the time-domain electromagnetic scattering problem with the perfectly conducting
boundary condition on the boundary of the obstacle:
\be\label{2.1}
\begin{cases}
\ds\nabla\times\bm{E}+\mu\frac{\pa\bm{H}}{\pa t}=\0&\gin\;\;\;(\R^3\se\ov{\Om})\times(0,T),\\
\ds\nabla\times\bm{H}-\vep\frac{\pa\bm{E}}{\pa t}=\bm J&\gin\;\;\;(\R^3\se\ov{\Om})\times(0,T),\\
\ds\bm{n}\times\bm E=\bm{0}&\on\;\;\;\G\times(0,T),\\
\ds\bm E(x,0)=\bm H(x,0)=\bm{0}&\gin\;\;\;\R^3\se\ov{\Om},\\
\ds\hat{x}\times\left(\frac{\pa\bm E}{\pa t}\times\hat{x}\right)+\hat{x}\times\frac{\pa\bm{H}}{\pa t}
=o\left(\frac1{|x|}\right)\;\;&\text{as}\;\;\;|x|\rightarrow\infty,\;\;t\in(0,T).
\end{cases}
\en
Here, $\Om\subset\R^3$ is a bounded domain with Lipschitz boundary $\G$, $\bm E$ and $\bm H$ denote
the electric and magnetic fields, respectively, and $\hat{x}:={x}/{|x|}$.
The electric permittivity $\vep$ and the magnetic permeability $\mu$ are assumed to be positive constants
in this paper. The current density $\bm J$ is assumed to be compactly supported in the ball
$B_R:=\{x\in\R^3:|x|<R\}$ with boundary $\G_R$ for some $R>0$.

Define the time-domain electric-to-magnetic (EtM) Calder\'on operator $\mathscr{T}$ by
\be\label{tbc}
\mathscr{T}[\bm E_{\G_R}]=\bm H\ti\hat{x}\;\;\;\;\on\;\;\;\G_R\times(0,T),
\en
which is called the transparent boundary condition (TBC). Then, by using (\ref{tbc})
the scattering problem (\ref{2.1}) can be reduced into an equivalent initial-boundary value problem
in a bounded domain $\Om_R:=B_R\se\ov{\Om}$:
\be\label{2.2}
\begin{cases}
\ds\nabla\times\bm{E}+\mu\frac{\pa\bm{H}}{\pa t}=\0&\gin\;\;\;\Om_R\times(0,T),\\
\ds\nabla\times\bm{H}-\vep\frac{\pa\bm{E}}{\pa t}=\bm J&\gin\;\;\;\Om_R\times(0,T),\\
\ds\bm{n}\times\bm E=\bm{0}&\on\;\;\;\G\times(0,T),\\
\ds\bm E(x,0)=\bm H(x,0)=\bm{0}&\gin\;\;\;\Om_R,\\
\ds\mathscr{T}[\bm E_{\G_R}]=\bm H\times\hat{x}&\on\;\;\;\G_R\times(0,T).
\end{cases}
\en

In what follows, we will give a representation of the operator $\mathscr{T}$ together with its important
properties (see \cite{chen2008} for details).
Since $\bm J$ is supported in $B_R$, then, by taking the Laplace transform of (\ref{2.1}) with respect
to $t$ we obtain that
\be\label{2.3}
\nabla\times\ch{\bm{E}}+\mu s\ch{\bm{H}}&=&\bm{0}\;\;\;\gin\;\;\;\R^3\se\ov{B}_R,\\ \label{2.3a}
\nabla\times\ch{\bm{H}}-\vep s\ch{\bm{E}}&=&\bm{0}\;\;\;\gin\;\;\;\R^3\se\ov{B}_R,\\ \label{2.3b}
\hat{x}\times(\ch{\bm{E}}\times\hat{x})+\hat{x}\times\ch{\bm{H}}
&=&o\left(\frac1{|x|}\right)\;\;\;\text{as}\;\;\;|x|\rightarrow\infty.
\en
Let $\bm\lambda=\hat{x}\times\ch{\bm{E}}|_{G_R}$ and
let $\mathscr{B}:H^{-1/2}(\Curl,\G_R)\to H^{-1/2}(\Dive,\G_R)$ be the EtM Calder\'on operator
in $s$-domain defined by
\ben
\mathscr{B}[\bm\la\times\hat{x}]=\ch{\bm{H}}\times\hat{x}\;\;\;\on\;\;\;\G_R.
\enn
Then $\mathscr{T}=\mathscr{L}^{-1}\circ\mathscr{B}\circ\mathscr{L}$.
We now derive a representation of the operator $\mathscr{B}$. To this end,
denote by $\{\bm e_r,\bm e_{\theta},\bm e_{\phi}\}$ the unit vectors of the spherical coordinates
$(r,\theta,\phi)$:
\begin{equation*}
\begin{aligned}
\bm{e}_{r}&=(\sin\theta\cos \phi,\sin\theta\sin\phi,\cos\theta)^T, \\
\bm{e}_{\theta} &=(\cos\theta\cos\phi,\cos\theta\sin\phi,-\sin\theta)^T, \\
\bm{e}_{\phi} &=(-\sin\phi,\cos\phi,0)^T.
\end{aligned}
\end{equation*}
Let $\{Y_n^m(\hat{x}), m=-n,...n, n=1,2,...\}$ be the spherical harmonics forming a complete orthonormal
basis of $L^2(\mat{S}^2)$ and satisfying
\ben
\Delta_{\mat{S}^2}Y_n^m(\hat{x})+n(n+1)Y_n^m(\hat{x})=0,
\enn
where
\ben
\Delta_{\mat{S}^2}:=\frac{1}{\sin\theta}\frac{\pa}{\pa\theta}\left(\sin\theta\frac{\pa}{\pa\theta}\right)
+\frac{1}{\sin^2\theta}\frac{\pa^2}{\pa\phi^2}.
\enn
Let the vector spherical harmonics be denoted by
\ben
\bm U_n^m=\frac{1}{\sqrt{n(n+1)}}\na_{\G_R}Y_n^m,\;\;\;\bm V_n^m=\hat{x}\ti\bm U_n^m,
\enn
where
\ben
\na_{\G_R}:=\frac{1}{R}\left[\frac{\pa}{\pa\theta}\bm e_{\theta}
+\frac{1}{\sin\theta}\frac{\pa}{\pa\phi}\bm e_{\phi}\right]=\frac{1}{R}\na_{\mat{S}^2}.
\enn
Then $\{\bm{U}_n^m,\bm{V}_n^m, m=-n,...n, n=1,2,...\}$ forms a complete orthonormal basis of
$L_t^2(\G_R):=\{\bm u\in L^2(\G_R)^3:\;\bm u\cdot\hat{x}=0\;\;\on\;\;\G_R\}$.

For any $\bm\la\times\hat{x}=\sum_{n=1}^{\infty}\sum_{m=-n}^n\left[a_n^m\bm U_n^m(\hat{x})
+b_n^m\bm V_n^m(\hat{x})\right]$ on $\G_R$, we have that for $r=|x|\geq R$,
\ben
\ch{\bm E}(r,\hat{x})&=&\sum_{n=1}^{\ify}\sum_{m=-n}^n\left[\frac{Ra_n^mz_n^{(1)}(kr)}{rz_n^{(1)}(kR)}
\bm U_n^m+\frac{b_n^mh_n^{(1)}(kr)}{h_n^{(1)}(kR)}\bm V_n^m\right.\\
&&\qquad\qquad\qquad\left.+\frac{Ra_n^m\sqrt{n(n+1)}h_n^{(1)}(kr)}{rz_n^{(1)}(kR)}Y_n^m\hat{x}\right],\\
\ch{\bm H}(r,\hat{x})&=&\sum_{n=1}^{\ify}\sum_{m=-n}^n\left[\frac{b_n^mz_n^{(1)}(kr)}{\mu srh_n^{(1)}(kR)}
\bm U_n^m-\frac{\vep sRa_n^mh_n^{(1)}(kr)}{z_n^{(1)}(kR)}\bm V_n^m\right.\\
&&\qquad\qquad\qquad\left.-\frac{b_n^m\sqrt{n(n+1)}h_n^{(1)}(kr)}{\mu srh_n^{(1)}(kR)}Y_n^m\hat{x}\right],
\enn
which is the solution of the exterior problem (\ref{2.3})-(\ref{2.3b}) satisfying
that $\gamma_T\ch{\bm E}=\bm\la\times\hat{x}\;\;\on\;\;\Gamma_R$,
where $k=i\sqrt{\vep\mu}s$ with $\I(k)>0$, $h_n^{(1)}(z)$ is the spherical
Hankel function of the first kind of order n and $z_n^{(1)}(z)=h_n^{(1)}(z)+zh_n^{(1)\prime}(z).$
A simple calculation gives
\be\label{setm}
\mathscr{B}[\bm\la\times\hat{x}]=\ch{\bm{H}}\times\hat{x}\big|_{\G_R}
=-\sum_{n=1}^{\infty}\sum_{m=-n}^n\Big[\frac{\vep sRa_n^mh_n^{(1)}(kR)}{z_n^{(1)}(kR)}\bm U_n^m
+\frac{b_n^mz_n^{(1)}(kR)}{\mu sRh_n^{(1)}(kR)}\bm V_n^m\Big].\;\;\qquad
\en

We have the following important results on the continuity and coercivity of the operator $\mathscr{B}$
(see \cite[Theorem 9.21]{Monk2003} and \cite[Lemma 2.5]{chen2008}).

\begin{lemma}\label{lem2.1}
For each $s\in \C_+$, $\mathscr{B}:H^{-1/2}(\Curl,\G_R)\to H^{-1/2}(\Dive,\G_R)$ is bounded
with the estimate
\be\label{B-bound}
\|\mathscr{B}[\bm\la\times\hat{x}]\|^2_{H^{-1/2}(\Dive,\G_R)}
\lesssim (|s|^2+|s|^{-2})\|\bm\la\times\hat{x}\|^2_{H^{-1/2}(\Curl,\G_R)}.
\en
Further, we have
\ben
\Rt\langle\mathscr{B}\bm\om,\bm\om\rangle_{\G_R}\geq 0\;\;\;
\text{for any}\;\;\bm\om\in H^{-1/2}(\Curl,\G_R),
\enn
where $\langle\cdot\rangle_{\G_R}$ denotes the dual product between $H^{-1/2}(\Dive,\G_R)$
and $H^{-1/2}(\Curl,\G_R)$.
\end{lemma}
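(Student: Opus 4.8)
The plan is to prove the two assertions separately, reading both off the explicit diagonal form (\ref{setm}) of $\mathscr{B}$ in the orthonormal basis $\{\bm U_n^m,\bm V_n^m\}$ of $L_t^2(\G_R)$. First I would record the spectral description of the two trace norms on $\G_R$: with $\la_n:=n(n+1)$, for a tangential field $\bm\xi=\sum_{n,m}(c_n^m\bm U_n^m+d_n^m\bm V_n^m)$ one has, by \cite{BCS2002} (see also \cite[Ch.~9]{Monk2003}),
\[
\|\bm\xi\|^2_{H^{-1/2}(\Curl,\G_R)}\approx\sum_{n,m}\Big[(1+\la_n)^{-1/2}|c_n^m|^2+(1+\la_n)^{1/2}|d_n^m|^2\Big],
\]
and the same identity with the two weights interchanged for $\|\bm\xi\|^2_{H^{-1/2}(\Dive,\G_R)}$ --- the point being that $\bm U_n^m$ is a surface gradient while $\bm V_n^m=\hat x\ti\bm U_n^m$ is its surface rotation, so that the divergence- and curl-carrying parts swap roles between the two norms. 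Inserting $\bm\om=\bm\la\ti\hat x=\sum_{n,m}(a_n^m\bm U_n^m+b_n^m\bm V_n^m)$ into (\ref{setm}) gives $\mathscr{B}\bm\om=-\sum_{n,m}\big(\sigma_n^U a_n^m\bm U_n^m+\sigma_n^V b_n^m\bm V_n^m\big)$ with
\[
\sigma_n^U(s)=\frac{\vep sR\,h_n^{(1)}(kR)}{z_n^{(1)}(kR)},\qquad
\sigma_n^V(s)=\frac{z_n^{(1)}(kR)}{\mu sR\,h_n^{(1)}(kR)},\qquad k=i\sqrt{\vep\mu}\,s,
\]
so that both claims reduce to pointwise-in-$(n,s)$ statements about $\sigma_n^U$ and $\sigma_n^V$.

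For the bound (\ref{B-bound}), combining the two norm descriptions shows that it is equivalent to the uniform modewise estimates
\[
(1+\la_n)\,|\sigma_n^U(s)|^2\les|s|^2+|s|^{-2},\qquad (1+\la_n)^{-1}\,|\sigma_n^V(s)|^2\les|s|^2+|s|^{-2}
\]
for all $n\ge1$ and $s\in\C_+$, after which summation over $n,m$ returns (\ref{B-bound}). These modewise estimates are in turn reduced, through $k=i\sqrt{\vep\mu}s$ (so $|kR|=\sqrt{\vep\mu}R|s|$), to the classical bounds that control the ratios $h_n^{(1)}(z)/z_n^{(1)}(z)$ and $z_n^{(1)}(z)/h_n^{(1)}(z)$ uniformly in the order $n$ and in the argument $z$ with $\I z>0$ --- precisely the spherical-Hankel estimates underlying \cite[Thm.~9.21]{Monk2003} and \cite[Lem.~2.5]{chen2008}; the powers $|s|^{\pm2}$ come from the two diagonal blocks, and the growth in $n$ is absorbed by the trace-norm weights.

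For the coercivity I would use an energy identity for the exterior problem rather than the series. Let $\ch{\bm E},\ch{\bm H}$ be the radiating solution of (\ref{2.3})--(\ref{2.3b}) (the one exhibited just before (\ref{setm})) with $\g_T\ch{\bm E}=\bm\om$ on $\G_R$, so that $\mathscr{B}\bm\om=\ch{\bm H}\ti\hat x$ on $\G_R$; since $\I(kR)>0$ these fields decay exponentially and lie in $L^2(\R^3\se\ov{B_R})^3$. For $\rho>R$ put $D_\rho:=B_\rho\se\ov{B_R}$; multiplying (\ref{2.3a}) by $\overline{\ch{\bm E}}$, integrating over $D_\rho$, using the integration-by-parts formula (\ref{curl_form}) on $D_\rho$ together with $\na\ti\ch{\bm E}=-\mu s\ch{\bm H}$ from (\ref{2.3}), one arrives at
\[
\langle\mathscr{B}\bm\om,\bm\om\rangle_{\G_R}=\vep s\,\|\ch{\bm E}\|^2_{L^2(D_\rho)}+\mu\bar s\,\|\ch{\bm H}\|^2_{L^2(D_\rho)}-\int_{\G_\rho}(\hat x\ti\ch{\bm H})\cdot\overline{\ch{\bm E}}\,ds .
\]
Taking real parts, the first two terms are $\ge0$ because $\Rt s>0$, while the radiation condition (\ref{2.3b}), which reads $\hat x\ti\ch{\bm H}=-\g_T\ch{\bm E}+o(|x|^{-1})$ on $\G_\rho$, gives $\Rt\int_{\G_\rho}(\hat x\ti\ch{\bm H})\cdot\overline{\ch{\bm E}}\,ds=-\int_{\G_\rho}|\g_T\ch{\bm E}|^2\,ds+o(1)$, hence $\le o(1)$ as $\rho\to\ify$; inserting this into the identity and letting $\rho\to\ify$ yields $\Rt\langle\mathscr{B}\bm\om,\bm\om\rangle_{\G_R}\ge0$. (Restricting this identity to a single mode reproduces the sign statements $\Rt\sigma_n^U\le0$, $\Rt\sigma_n^V\le0$, i.e. the positivity of the logarithmic derivatives of the Riccati--Hankel functions exploited in \cite[Thm.~9.21]{Monk2003} and \cite[Lem.~2.5]{chen2008}.)

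The conceptual steps --- diagonalisation, the trace-norm characterisation, and the exterior energy identity --- are routine; the part needing real care, and the only genuinely technical input, is the special-function analysis, namely having the bounds on $h_n^{(1)}/z_n^{(1)}$ and $z_n^{(1)}/h_n^{(1)}$ uniformly in both the order $n$ and the argument $kR$ (including through the turning-point regime $n\approx|kR|$) in exactly the form that yields the dependence $|s|^2+|s|^{-2}$ in (\ref{B-bound}). These estimates are borrowed, essentially verbatim, from \cite{Monk2003,chen2008}; everything else is bookkeeping.
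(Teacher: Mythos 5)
Your proof of the estimate (\ref{B-bound}) is essentially the paper's own argument: diagonalise $\mathscr{B}$ in the basis $\{\bm U_n^m,\bm V_n^m\}$ via (\ref{setm}), use the weighted spectral description of the $H^{-1/2}(\Curl,\G_R)$ and $H^{-1/2}(\Dive,\G_R)$ norms, and reduce everything to the two-sided bound $C_1 n\le |z_n^{(1)}(kR)/h_n^{(1)}(kR)|\le C_2 n$; the paper quotes this Hankel-ratio estimate from \cite[Lemma C.3]{LLA2015} rather than from the special-function analysis in \cite{Monk2003,chen2008}, but the technical input is the same, and your modewise inequalities $(1+\la_n)|\sigma_n^U|^2\lesssim |s|^2+|s|^{-2}$, $(1+\la_n)^{-1}|\sigma_n^V|^2\lesssim |s|^2+|s|^{-2}$ are exactly what that bound delivers. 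Where you genuinely diverge is the coercivity: the paper does not prove $\Rt\langle\mathscr{B}\bm\om,\bm\om\rangle_{\G_R}\ge 0$ at all, simply citing \cite[Theorem 9.21]{Monk2003} and \cite[Lemma 2.5]{chen2008}, whereas you give a self-contained exterior energy identity on $B_\rho\se\ov{B}_R$ using (\ref{curl_form}), the sign of $\Rt(s)$, and the fact that $\I(k)>0$ makes the fields decay exponentially so the $\G_\rho$ contribution vanishes (one could even drop the radiation-condition manipulation and let the whole boundary term tend to zero). That identity, with the orientation of the normal on $\G_R$ handled as you did, is correct and buys a proof that avoids the modewise sign analysis of the Riccati--Hankel logarithmic derivatives used in the cited references, at the mild cost of needing the radiating exterior solution for arbitrary data $\bm\om\in H^{-1/2}(\Curl,\G_R)$ with enough regularity to justify (\ref{curl_form}), which the explicit series solution and a density argument supply.
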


\begin{proof}
The boundedness and coercivity of the operator $\mathscr{B}$ have been proved in
\cite{Monk2003,chen2008} (see \cite[Theorem 9.21]{Monk2003} and \cite[Lemma 2.5]{chen2008}).
Here, we only prove the estimate (\ref{B-bound}) with the explicit dependence on $s$
which will be needed in Section \ref{sec5}.

By (\ref{setm}) and the definition of the norm of $H^{-1/2}(\Dive,\G_R)$ and $H^{-1/2}(\Curl,\G_R)$
we have
\ben
&&\|\mathscr{B}[\bm\la\ti\hat{x}]\|^2_{H^{-1/2}(\Dive,\G_R)}\\
&&\qquad=\sum_{n=1}^{\infty}\sum_{m=-n}^{n}\left[\sqrt{n(n+1)}
\left|\frac{\vep sRa_n^mh_n^{(1)}(kR)}{z_n^{(1)}(kR)}\right|^2
+\frac{1}{\sqrt{n(n+1)}}\left|\frac{b_n^mz_n^{(1)}(kR)}{\mu sRh_n^{(1)}(kR)}\right|^2\right].
\enn
By \cite[Lemma C.3]{LLA2015}, there exist two positive constants $C_1$ and $C_2$ such that
\ben
C_1 n\leq\left|\frac{z_n^{(1)}(kR)}{h_n^{(1)}(kR)}\right|\leq C_2 n.
\enn
Then it follows that
\ben
\|\mathscr{B}[\bm\la\times\hat{x}]\|^2_{H^{-1/2}(\Dive,\G_R)}
&\lesssim &\sum_{n=1}^{\infty}\sum_{m=-n}^n\left[|s|^2\frac{1}{\sqrt{n(n+1)}}|a_n^m|^2
+|s|^{-2}\sqrt{n(n+1)}|b_n^m|^2\right]\\
&\lesssim & (|s|^2+|s|^{-2})\|\bm\la\times\hat{x}\|^2_{H^{-1/2}(\Curl,\G_R)}.
\enn
The proof is thus complete.
\end{proof}

By Lemma \ref{lem2.1} and the Parseval identity, the coercivity of the time-domain EtM Calder\'on
operator $\mathscr{T}$ follows easily.

\begin{lemma}\label{lem2.2}
Given $t\geq 0$ and vector $\bm\om\in L^2\left(0,t;H^{-1/2}(\Curl,\G_R)\right)$, it follows that
\ben
\Rt\int_0^t\int_{\G_R}\mathscr{T}[\bm\om]\cdot\bar{\bm\om}d\g d\tau\geq 0.
\enn
\end{lemma}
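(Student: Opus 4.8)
The plan is to transfer the frequency-domain coercivity of $\mathscr{B}$ in Lemma \ref{lem2.1} to the time-domain operator $\mathscr{T}$ via the Parseval identity \eqref{A.5}, using the relation $\mathscr{T}=\mathscr{L}^{-1}\circ\mathscr{B}\circ\mathscr{L}$. First I would fix $t\geq 0$ and a field $\bm\om\in L^2(0,t;H^{-1/2}(\Curl,\G_R))$, and extend $\bm\om$ by zero outside $[0,t]$ so that it becomes a causal field on $(0,\infty)$ whose Laplace transform $\ch{\bm\om}(s)$ is well-defined and holomorphic for $\Rt(s)=s_1>0$. Since the extension vanishes for $\tau>t$, we have $\int_0^t\int_{\G_R}\mathscr{T}[\bm\om]\cdot\overline{\bm\om}\,d\g\,d\tau = \int_0^\infty\int_{\G_R}\mathscr{T}[\bm\om]\cdot\overline{\bm\om}\,d\g\,d\tau$, which is the quantity the Parseval identity is designed to handle.

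Next I would insert the convergence factor $e^{-2s_1\tau}$ and pass to the limit $s_1\to 0^+$. Concretely, for $s_1>0$ consider $\int_0^\infty e^{-2s_1\tau}\int_{\G_R}\mathscr{T}[\bm\om]\cdot\overline{\bm\om}\,d\g\,d\tau$; by the Parseval identity \eqref{A.5} applied on $\G_R$ (pairing the tangential fields $\mathscr{T}[\bm\om]$ and $\bm\om$ componentwise, or equivalently using the duality pairing), this equals $\frac{1}{2\pi}\int_{-\infty}^{\infty}\langle\mathscr{B}[\widehat{\bm\om}(s)],\overline{\widehat{\bm\om}(s)}\rangle_{\G_R}\,ds_2$, where $s=s_1+is_2$. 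Taking real parts and invoking the pointwise bound $\Rt\langle\mathscr{B}\widehat{\bm\om},\widehat{\bm\om}\rangle_{\G_R}\geq 0$ from Lemma \ref{lem2.1}, the right-hand side is nonnegative, hence $\Rt\int_0^\infty e^{-2s_1\tau}\int_{\G_R}\mathscr{T}[\bm\om]\cdot\overline{\bm\om}\,d\g\,d\tau\geq 0$ for every $s_1>0$. Letting $s_1\downarrow 0$ and using dominated convergence (the integrand is supported in the bounded interval $(0,t)$ and $e^{-2s_1\tau}\leq 1$ there) yields $\Rt\int_0^t\int_{\G_R}\mathscr{T}[\bm\om]\cdot\overline{\bm\om}\,d\g\,d\tau\geq 0$, which is the claim.

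The main obstacle, and the point requiring care, is justifying that $\mathscr{T}[\bm\om]$ is a genuine function (not merely a distribution) lying in a space for which the Parseval identity \eqref{A.5} is applicable, and that the abscissa of convergence is nonpositive so that the limit $s_1\to 0^+$ is legitimate. This is where the growth bound \eqref{B-bound}, $\|\mathscr{B}[\bm\la\times\hat{x}]\|_{H^{-1/2}(\Dive,\G_R)}^2\lesssim(|s|^2+|s|^{-2})\|\bm\la\times\hat{x}\|_{H^{-1/2}(\Curl,\G_R)}^2$, combined with the Paley–Wiener–Schwartz characterization of Laplace transforms (the lemma from \cite{treves1975} quoted above), shows that $\ch{\bm\om}(s)\mapsto\mathscr{B}[\ch{\bm\om}(s)]$ has at most polynomial growth in $|s|$, hence $\mathscr{T}[\bm\om]\in\mathcal{D}_+'$ with a well-defined inverse transform; the temporal regularity of $\bm\om$ (membership in $L^2$ in time) together with the factor $|s|^{-2}$ in the low-frequency part of \eqref{B-bound} ensures enough integrability in $s_2$ for \eqref{A.5} to hold. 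Once these functional-analytic points are settled, the argument is the routine Parseval-plus-positivity computation sketched above.
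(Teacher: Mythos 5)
Your proposal is correct and follows essentially the same route as the paper: extend $\bm\om$ by zero, insert the factor $e^{-2s_1\tau}$, apply the Parseval identity \eqref{A.5} to convert the time-domain pairing into $\frac{1}{2\pi}\int_{-\infty}^{\infty}\Rt\langle\mathscr{B}[\ch{\bm\om}],\ch{\bm\om}\rangle_{\G_R}\,ds_2\geq 0$ via Lemma \ref{lem2.1}, and let $s_1\to 0^+$. The extra functional-analytic justification you sketch (growth bound plus Paley--Wiener) is a reasonable elaboration but not a departure from the paper's argument.
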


\begin{proof}
Let $\wid{\bm\om}$ be the extension of $\bm\om$ by 0 with respect to $\tau$, that is,
$\wid{\bm\om}$ vanishes outside $[0,t]$. Combining the Parseval identity (\ref{A.5})
and Lemma {\ref{lem2.1}}, we have that for any $s_1>0$,
\ben
\Rt\int_0^te^{-2s_1\tau}\int_{\G_R}\mathscr{T}[\bm\om]\cdot\bar{\bm\om}d\g d\tau
&=&\Rt\int_{\G_R}\int_0^{\infty}e^{-2s_1\tau}\mathscr{T}[\wid{\bm\om}]\cdot\bar{\wid{\bm\om}}d\tau d\g\\
&=&\frac{1}{2\pi}\int_{-\infty}^{\infty}\Rt\langle\mathscr{B}[\ch{\wid{\bm\om}}],
\ch{\wid{\bm\om}}\rangle_{\G_R}ds_2\ge 0.
\enn
Taking the limit $s_1\rightarrow 0$ in the above inequality gives the required result.
\end{proof}

The well-posedness and stability of solutions of the scattering problem (\ref{2.2}) follow directly
from \cite[Theorem 3.1]{chen2008}.
Precisely, if $\bm J\in H^1(0,T;L^2(\Om_R)^3),\;\bm J|_{t=0}=0$ and $\bm J$ is extended so that
\ben
\bm J\in H^1(0,\infty;L^2(\Om_R)^3),\;\;\|\bm J\|_{H^1(0,\infty;L^2(\Om_R)^3)}
\le C\|\bm J\|_{H^1(0,T;L^2(\Om_R)^3)},
\enn
then we have
\ben
&\bm E\in L^2\left(0,T;H_{\G}(\curl,\Om_R)\right)\cap H^1\left(0,T;L^2(\Om_R)^3\right),\\
&\bm H\in L^2\left(0,T;H_{\G}(\curl,\Om_R)\right)\cap H^1\left(0,T;L^2(\Om_R)^3\right).
\enn
In particular, $\mathscr{T}[\bm E_{\G_R}]\in L^2\left(0,T;H^{-1/2}(\Dive,\G_R)\right)$.

To simplify the proof of the convergence of the PML method, we assume in the rest of this paper that
\be\label{assumption}
\bm J\in H^7(0,T;L^2(\Om_R)^3),\;\;\pa_t^j\bm J|_{t=0}=0,\;\;j=0,1,2,3,4,5,6
\en
and that $\bm J$ is extended so that
\be\label{assumption1}
\bm J\in H^7(0,\infty;L^2(\Om_R)^3),\;\;\|\bm J\|_{H^7(0,\infty;L^2(\Om_R)^3)}
\le C\|\bm J\|_{H^7(0,T;L^2(\Om_R)^3)}.
\en
Note that, under the assumption (\ref{assumption}), the differentiability with respect to $t$ of the
solution $(\bm E,\bm H)$ can be improved to the same order as $\bm J$, which can be easily verified
by using the Maxwell equations.

\section{The time-domain PML problem}\label{sec4}
\setcounter{equation}{0}

In this section, we first derive the time-domain PML formulation for the electromagnetic scattering problem
and then establish the well-posedness and stability of the PML problem by using the Laplace transform
and the energy method. Further, we prove the exponential convergence of the time-domain PML method.
\begin{figure}[!htbp]
\setcounter{subfigure}{0}
  \centering
  \includegraphics[width=2.4in]{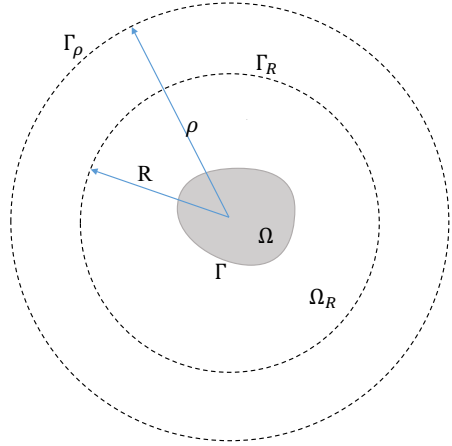}
 \caption{Geometric configuration of the PML layer}\label{geometry}
\end{figure}

\subsection{The PML problem and its well-posedness}

For $\rho>R$ let $\Om^{\PML}:=B_{\rho}\se \ov{B}_R=\{x\in\R^3: R<|x|<\rho\}$ denote the PML layer
with thickness $d:=\rho-R$, surrounding the bounded domain $\Om_R$. Denote by
$\Om_{\rho}:=B_{\rho}\se\ov{\Om}$ the truncated PML domain with the exterior boundary
$\G_{\rho}:=\{x\in\R^3:|x|=\rho\}$. See Figure \ref{geometry} for the geometry of the PML problem.
For $x=(x_1,x_2,x_3)^T\in\R^3$ consider the spherical coordinates
\ben
x_1=r\sin\theta\cos\phi,\;\;x_2=r\sin\theta\sin\phi,\;\;
x_3=r\cos\theta
\enn
with $r=|x|$ and the Euler angle $(\theta,\phi)$.

Now, let $s_1>0$ be an arbitrarily fixed parameter and let us define the PML medium property as
\ben
\al(r)=1+s_1^{-1}\sig(r),\;\;\;r=|x|,
\enn
where
\be\label{3.1}
\sig(r)=\begin{cases}
\ds 0,&0\le r\le R,\\
\ds \sig_0\Big(\frac{r-R}{\rho-R}\Big)^m,& R\le r\le\rho,\\
\ds\sig_0,&\rho\le r<\infty
\end{cases}
\en
with positive constant $\sig_0$ and integer $m\geq 1$.
In what follows, we will take the real part of the Laplace transform variable $s\in\C_+$ to be
$s_1$, that is, $\Rt(s)=s_1$.

In the rest of this paper, we always make the following assumptions on the thickness $d$
which are reasonable in our model:
\be\label{thick}
d\geq 1\;\;\;{\rm and}\;\;\;\rho\leq C_0d
\en
for some fixed generic constant $C_0$.

We will derive the PML equations by using the technique of change of variables. To this end, we
introduce the real stretched radius $\wid{r}$:
\be\label{3.1a}
\wid{r}=\int_0^r\al(\tau)d\tau=r\beta(r),
\en
where $\ds\beta(r)=\frac{1}{r}\int_0^r\al(\tau)d\tau$. For the Cartesian coordinates $x=(x_1,x_2,x_3)^T$,
the corresponding change of variables is $\wid{x}=(\wid{x}_1,\wid{x}_2,\wid{x}_3)^T$ with
\be\label{3.1b}
\wid{x}_1=\wid{r}\sin\theta\cos\phi,\;\;\wid{x}_2=\wid{r}\sin\theta\sin\phi,\;\;
\wid{x}_3=\wid{r}\cos\theta,
\en
where $\wid{r}$ denotes the real stretched radius of $r=|x|$ defined by (\ref{3.1a}).

To derive the PML equations, we introduce, respectively, the Maxwell single- and double-layer potentials
\ben
\bm\Psi_{{\rm SL}}(\bm q)=\int_{\G_R}\mathbb{G}^T(s,x,y)\bm q(y)d\g(y),\quad
\bm\Psi_{{\rm DL}}(\bm p)=\int_{\G_R}(\curl_y\mathbb{G})^T(s,x,y)\bm p(y)d\g(y),
\enn
where $\bm p=\g_t(\ch{\bm E})$ and $\bm q=\g_t(\curl\;\ch{\bm E})$ are the Dirichlet trace
and Neumann trace of the solution on $\G_R$, and $\mathbb{G}$ is the dyadic Green's function for
Maxwell's equations in the free space defined as a matrix function (see \cite[(12.1)]{Monk2003}):
\ben
\mathbb{G}(s,x,y)=\Phi_s(x,y)\mathbb{I}+\frac{1}{k^2}\nabla_y\nabla_y\Phi_s(x,y),\;\;\; x\neq y.
\enn
Hereafter, $s\in\C_+$ with $\Rt(s)=s_1$, $\mathbb{I}$ is the $3\times 3$ identity matrix,
$\Phi_s(x,y)$ is the fundamental solution of
the Helmholtz equation with complex wave number $k=i\sqrt{\vep\mu}s$ defined by
\be\label{3.1e}
\Phi_s(x,y)=\frac{e^{ik|x-y|}}{4\pi|x-y|}=\frac{e^{-\sqrt{\vep\mu}s|x-y|}}{4\pi|x-y|},
\en
and $\nabla_y\nabla_y\Phi_s(x,y)$ is the Hessian matrix of $\Phi_s(x,y)$ with its $(l,m)$th element
\be\label{3.1f}
(\nabla_y\nabla_y\Phi_s(x,y))_{l,m}=\frac{\pa^2\Phi_s(x,y)}{\pa_{y_l}\pa_{y_m}},\quad 1\leq l,m\leq 3.
\en
Then the solution of the exterior problem (\ref{2.3})-(\ref{2.3b}) is given by the integral
representation (see \cite[Theorem 12.2]{Monk2003})
\be\label{3.1c}
\ch{\bm E}(x)=-\bm\Psi_{{\rm SL}}(\bm q)(x)-\bm\Psi_{{\rm DL}}(\bm p)(x),\;\;\;
\ch{\bm H}(x)=-(\mu s)^{-1}\curl\;\ch{\bm E}(x).
\en

Let
\be\label{3.2}
\rho_s(\wid{x},y)=s|\wid{x}-y|=\left[s^2\left[(\wid{x}_1-y_1)^2+(\wid{x}_2-y_2)^2
+(\wid{x}_3-y_3)^2\right]\right]^{1/2}
\en
be the complex distance and let us define the stretched fundamental solution
\be\label{3.3}
\wid{\Phi}_s(x,y)=\frac{e^{-\sqrt{\vep\mu}\rho_s(\wid{x},y)}}{4\pi\rho_s(\wid{x},y)s^{-1}},
\en
where $z^{1/2}$ denotes the analytic branch of $\sqrt{z}$ satisfying that $\Rt(z^{1/2})>0$
for any $z\in\C\se(-\infty,0]$.

Now, for $x\in\R^3\se\ov{B}_R$ define the stretched single- and double-layer potentials
\ben
\wid{\bm\Psi}_{{\rm SL}}(\bm q)=\int_{\G_R}\wid{\mathbb{G}}^T(s,x,y)\bm q(y)d\g(y),\quad
\wid{\bm\Psi}_{{\rm DL}}(\bm p)=\int_{\G_R}(\curl_y\wid{\mathbb{G}})^T(s,x,y)\bm p(y)d\g(y),
\enn
where
\be\label{3.3a}
\wid{\mathbb{G}}(s,x,y)=\wid{\Phi}_s(x,y)\mathbb{I}
+\frac{1}{k^2}\nabla_y\nabla_y\wid{\Phi}_s(x,y),\;\;\;x\neq y,\;\;\;k=i\sqrt{\vep\mu}s.
\en
For any $\bm p\in H^{-1/2}(\Dive,\G_R)$ and $\bm q\in H^{-1/2}(\Dive,\G_R)$, let
\be\label{3.4}
\mathbb{E}(\bm p,\bm q)=-\wid{\bm\Psi}_{{\rm SL}}(\bm q)(x)-\wid{\bm\Psi}_{{\rm DL}}(\bm p)(x)
\en
denote the PML extensions in the $s$-domain of $\bm p$ and $\bm q$. Now, let
\be\label{3.4'}
\ch{\wid{\bm E}}(x)=\mathbb{E}(\g_t(\ch{\bm E}),\g_t(\curl\;\ch{\bm E})),\;\;\;
\ch{\wid{\bm H}}(x)=-(\mu s)^{-1}\wid{\curl}\;\ch{\wid{\bm E}}(x)
\en
be the PML extensions of $\g_t(\ch{\bm E})$ and $\g_t(\curl\;\ch{\bm E})$ on $\G_R$.
Then the stretched $\curl$ operator in the spherical coordinates is defined by
\ben
\wid{\curl}\;\bm u&=&\wid{\nabla}\times\bm u\\
&:=&\frac{1}{\wid{r}\sin\theta}\left[\frac{\pa(\sin\theta u_{\phi})}{\pa\theta}
    -\frac{\pa u_{\theta}}{\pa\phi}\right]\bm e_r
    +\left[\frac{1}{\wid{r}\sin\theta}\frac{\pa u_{r}}{\pa\phi}
    -\frac{1}{\wid{r}}\frac{\pa(\wid{r}u_{\phi})}{\pa\wid{r}}\right]\bm e_{\theta}\\
&&+\frac{1}{\wid{r}}\left[\frac{\pa(\wid{r}u_{\theta})}{\pa\wid{r}}
  -\frac{\pa u_r}{\pa{\theta}}\right]\bm e_{\phi}
\enn
with $u_r=\bm u\cdot\bm e_r$, $u_{\theta}=\bm u\cdot\bm e_{\theta}$ and $u_{\phi}=\bm u\cdot\bm e_{\phi}$
for any vector $\bm u$. It is easy to verify that
\ben
\wid{\nabla}\times\bm u=A(r)\nabla\times B(r)\bm u,
\enn
where $A(r)=\diag\{\beta^{-2},\al^{-1}\beta^{-1},\al^{-1}\beta^{-1}\}$
and $B(r)=\diag\{\al,\beta,\beta\}$.

It is clear that $\ch{\wid{\bm E}}$ and $\ch{\wid{\bm H}}$ satisfy
\be\label{3.4a}
\wid{\nabla}\times\ch{\wid{\bm{E}}}+\mu s\ch{\wid{\bm{H}}}=\0,\;\;\;
\wid{\nabla}\times\ch{\wid{\bm{H}}}-\vep s\ch{\wid{\bm{E}}}=\0\;\;\;\gin\;\;\;\R^3\se\ov{B}_R.
\en
Taking the inverse Laplace transform of (\ref{3.4a}) gives
\be\label{3.4b}
\wid{\nabla}\times\wid{\bm{E}}+\mu\pa_t\wid{\bm{H}}=\0,\;\;\;
\wid{\nabla}\times\wid{\bm{H}}-\vep\pa_t\wid{\bm{E}}=\0\;\;\;\gin\;\;\;(\R^3\se\ov{B}_R)\times(0,T).
\en
Define
\ben
(\bm E^{{\rm PML}},\bm H^{{\rm PML}}):=B(r)(\wid{\bm E},\wid{\bm H}).
\enn
Since $\ch{\wid{\bm E}}$ and $\ch{\wid{\bm H}}$ decay exponentially for $\Rt(s)=s_1>0$ as $r\to\infty$,
then $\wid{\bm E}$ and $\wid{\bm H}$ and thus $\bm E^{{\rm PML}}$ and $\bm H^{{\rm PML}}$ would
decay for $r\to\infty$. Further, since $\sig(R)=0$, then $\al=\beta=1$ on $\G_R$ so that
$\bm{E}^{{\rm PML}}=\bm E$ and $\bm{H}^{{\rm PML}}=\bm H$ on $\G_R$. Thus,
$(\bm{E}^{{\rm PML}},\bm{H}^{{\rm PML}})$ can be viewed as the extension of the solution of
the problem (\ref{2.1}).
If we set $\bm{E}^{{\rm PML}}=\bm E$ and $\bm{H}^{{\rm PML}}=\bm H$ in $\Om_R\ti(0,T)$,
then $(\bm E^{{\rm PML}},\bm H^{{\rm PML}})$ satisfies the PML problem
\be\label{pml}
\begin{cases}
\ds\nabla\times\bm{E}^{{\rm PML}}+\mu(BA)^{-1}\frac{\pa\bm{H}^{{\rm PML}}}{\pa t}=\0&\gin\;\;
   (\R^3\se\ov{\Om})\times(0,T),\\
\ds\nabla\times\bm{H}^{{\rm PML}}-\vep(BA)^{-1}\frac{\pa\bm{E}^{{\rm PML}}}{\pa t}=\bm J&\gin\;\;
   (\R^3\se\ov{\Om})\times(0,T),\\
\ds\bm{n}\times\bm E^{{\rm PML}}=\0&\on\;\;\G\times(0,T),\\
\ds\bm E^{{\rm PML}}(x,0)=\bm H^{{\rm PML}}(x,0)=\0&\gin\;\;\R^3\se\ov{\Om}.
\end{cases}
\en
The truncated PML problem in the time domain is to find $(\bm E^{p},\bm H^{p})$,
which is an approximation to $(\bm E,\bm H)$ in $\Om_R$ such that
\be\label{tpml}
\begin{cases}
\ds\nabla\times\bm{E}^p+\mu(BA)^{-1}\frac{\pa\bm{H}^p}{\pa t}=\0&\gin\;\;\Om_{\rho}\times(0,T),\\
\ds\nabla\times\bm{H}^p-\vep(BA)^{-1}\frac{\pa\bm{E}^p}{\pa t}=\bm J&\gin\;\;\Om_{\rho}\times(0,T),\\
\ds\bm{n}\times\bm E^p=\0&\on\;\;\G\times(0,T),\\
\ds\hat{x}\times\bm E^p=\0&\on\;\;\G_{\rho}\times(0,T),\\
\ds\bm E^p(x,0)=\bm H^p(x,0)=\0&\gin\;\;\Om_{\rho}.
\end{cases}
\en
Note that $s_1$ appearing in the matrices $A$ and $B$ is an arbitrarily fixed, positive parameter,
as mentioned earlier at the beginning of this subsection. In the Laplace transform domain,
the transform variable $s\in\C_+$ is taken so that $\Rt(s)=s_1>0$, and in the subsequent study of
the well-posedness and convergence of the truncated PML problem (\ref{tpml}), we take $s_1=1/T$.

The well-posedness of the truncated PML problem (\ref{tpml}) will be proved by using the Laplace transform
and the variational method. To this end, we first take the Laplace transform of the problem (\ref{tpml})
with the transform variable $s\in\C_+$ satisfying that $\Rt(s)=s_1$ and then eliminate the magnetic field
$\ch{\bm H}^{p}$ to obtain that
\be\label{espml}
\begin{cases}
\ds\nabla\times\left[(\mu s)^{-1}BA\nabla\times\ch{\bm E}^{p}\right]+\vep s(BA)^{-1}\ch{\bm E}^{p}
=-\ch{\bm J}\;\;&\gin\;\;\Om_{\rho},\\
\ds\bm{n}\times\ch{\bm E}^{p}=\0&\on\;\;\G,\\
\ds\hat{x}\times\ch{\bm E}^{p}=\0&\on\;\;\G_{\rho}.
\end{cases}
\en
It is easy to derive the variational formulation of (\ref{espml}):
Find a solution $\ch{\bm E}^p\in H_0(\curl,\Om_{\rho})$ such that
\be\label{3.8}
a_p(\ch{\bm{E}}^p,\bm V)=-\int_{\Om_R}\ch{\bm J}\cdot\ov{\bm V}dx,\quad\forall\;
\bm V\in H_0(\curl,\Om_{\rho}),
\en
where the sesquilinear form $a_p(\cdot,\cdot)$ is defined by
\ben
a_p(\ch{\bm{E}}^p,\bm V)
=\int_{\Om_{\rho}}(\mu s)^{-1}BA(\nabla\times\ch{\bm{E}}^p)\cdot(\nabla\times\ov{\bm{V}})dx
+\int_{\Om_{\rho}}s\vep(BA)^{-1}\ch{\bm{E}}^p\cdot\ov{\bm{V}}dx.
\enn

From (\ref{3.1}) we know that $\bt(r)=1+s_1^{-1}\hat{\sig}(r)$, where
\be\label{3.8a}
\hat{\sig}(r)=\begin{cases}
\ds\frac{1}{r}\int_R^{r}\sig(\tau)d\tau=\frac{\sig_0}{m+1}\frac{r-R}{r}\Big(\frac{r-R}{\rho-R}\Big)^m
 \;\;\; &\text{for $\;R\le r\le\rho$},\\
\ds\frac{\sig_0[(m+1)r-m\rho-R]}{[(m+1)r]}\;\;\; &\mbox{for $\;r\ge\rho$}.
\end{cases}
\en
It is obvious that
\ben
0\leq\hat{\sig}\leq\sig\leq\sig_0\;\;\;\text{for}\;\;R\le r\le\rho.
\enn
Noting that $BA=\diag\{\al\bt^{-2},\al^{-1},\al^{-1}\}$, we have
\be\no
&&\Rt[a_p(\ch{\bm{E}}^p,\ch{\bm{E}}^p)]\\ \no
&&=\int_{\Om_{\rho}}\frac{s_1}{\mu|s|^2}\left\{\frac{1+s_1^{-1}\sig}{(1+s_1^{-1}\hat{\sig})^2}
|(\na\ti\ch{\bm{E}}^p)_r|^2+\frac{1}{1+s_1^{-1}\sig}|(\nabla\times\ch{\bm{E}}^p)_\theta|^2\right.\\ \no
&&\qquad\qquad\qquad\left.+\frac{1}{1+s_1^{-1}\sig}|(\nabla\times\ch{\bm{E}}^p)_\phi|^2\right\}dx\\ \no
&&+\int_{\Om_\rho}\vep s_1\left\{\frac{(1+s_1^{-1}\hat{\sig})^2}{1+s_1^{-1}\sig}|\ch{\bm{E}}^p_r|^2
+(1+s_1^{-1}\sig)|\ch{\bm{E}}^p_\theta|^2+(1+s_1^{-1}\sig)|\ch{\bm{E}}^p_\phi|^2\right\}dx\\ \label{coercivity}
&&\gtrsim\frac{1}{1+s_1^{-1}\sig_0}\frac{s_1}{|s|^2}\left(\|\nabla\times\ch{\bm{E}}^p\|_{L^2(\Om_\rho)^3}^2
+\|s\ch{\bm{E}}^p\|^2_{L^2(\Om_\rho)^3}\right),
\en
which yields the strict coercivity of $a_p(\cdot,\cdot)$.

\begin{lemma}\label{lem3.1}
The variational problem $(\ref{3.8})$ of the problem $(\ref{espml})$ has a unique solution
$\ch{\bm E}^p\in H_0(\curl,\Om_{\rho})$ for each $s\in C_+$ with $\Rt(s)=s_1>0$. Further, it holds that
\be\label{3.11}
\|\nabla\times\ch{\bm E}^p\|_{L^2(\Om_{\rho})^3}+\|s\ch{\bm E}^p\|_{L^2(\Om_{\rho})^3}
\les s_1^{-1}(1+s_1^{-1}\sig_0)\|s\ch{\bm J}\|_{L^2(\Om_R)^3}.
\en
\end{lemma}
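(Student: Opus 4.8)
The plan is to prove existence and uniqueness of $\ch{\bm E}^p$ via the Lax--Milgram lemma applied to the sesquilinear form $a_p(\cdot,\cdot)$ on $H_0(\curl,\Om_\rho)$, and then to extract the stability estimate $(\ref{3.11})$ from the coercivity inequality $(\ref{coercivity})$ already established just above the statement. First I would record that $a_p$ is bounded on $H_0(\curl,\Om_\rho)\times H_0(\curl,\Om_\rho)$: the diagonal matrices $BA$ and $(BA)^{-1}$ have entries that are smooth and uniformly bounded on $\ov{\Om}_\rho$ (since $1\le 1+s_1^{-1}\sig\le 1+s_1^{-1}\sig_0$ and likewise for $\bt$), so
\[
|a_p(\bm U,\bm V)|\les\big(|s|^{-1}+|s|\big)\,\|\bm U\|_{H(\curl,\Om_\rho)}\,\|\bm V\|_{H(\curl,\Om_\rho)},
\]
with a constant depending on $s_1$, $\vep$, $\mu$, $\sig_0$. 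Boundedness of the antilinear functional $\bm V\mapsto-\int_{\Om_R}\ch{\bm J}\cdot\ov{\bm V}\,dx$ is immediate by Cauchy--Schwarz, $|(\ch{\bm J},\bm V)_{\Om_R}|\le\|\ch{\bm J}\|_{L^2(\Om_R)^3}\|\bm V\|_{L^2(\Om_\rho)^3}$.

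The key point is coercivity: inequality $(\ref{coercivity})$ gives, for some generic constant,
\[
\Rt[a_p(\bm V,\bm V)]\gtrsim\frac{1}{1+s_1^{-1}\sig_0}\,\frac{s_1}{|s|^2}\Big(\|\nabla\times\bm V\|^2_{L^2(\Om_\rho)^3}+\|s\bm V\|^2_{L^2(\Om_\rho)^3}\Big)
\]
for all $\bm V\in H_0(\curl,\Om_\rho)$. Since $|s|\ge s_1>0$, the right-hand side dominates a positive multiple of $\|\bm V\|^2_{H(\curl,\Om_\rho)}$, so $a_p$ is coercive and Lax--Milgram yields a unique $\ch{\bm E}^p\in H_0(\curl,\Om_\rho)$ solving $(\ref{3.8})$ for each $s\in\C_+$ with $\Rt(s)=s_1>0$. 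Equivalence with $(\ref{espml})$ is the standard argument: any $H_0(\curl)$ solution of the variational problem satisfies the PDE in the distributional sense and conversely, so uniqueness transfers.

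For the stability bound, I would test $(\ref{3.8})$ with $\bm V=\ch{\bm E}^p$ and take real parts. The left side is bounded below by $(\ref{coercivity})$, and the right side is $\Rt\big[-(\ch{\bm J},\ch{\bm E}^p)_{\Om_R}\big]\le\|\ch{\bm J}\|_{L^2(\Om_R)^3}\|\ch{\bm E}^p\|_{L^2(\Om_R)^3}$. Writing $Y:=\|\nabla\times\ch{\bm E}^p\|_{L^2(\Om_\rho)^3}+\|s\ch{\bm E}^p\|_{L^2(\Om_\rho)^3}$ and estimating $\|\ch{\bm E}^p\|_{L^2(\Om_R)^3}\le|s|^{-1}\|s\ch{\bm E}^p\|_{L^2(\Om_\rho)^3}\le|s|^{-1}Y$, coercivity gives
\[
\frac{1}{1+s_1^{-1}\sig_0}\,\frac{s_1}{|s|^2}\,Y^2\les\|\ch{\bm J}\|_{L^2(\Om_R)^3}\,\frac{Y}{|s|}=\|s\ch{\bm J}\|_{L^2(\Om_R)^3}\,\frac{Y}{|s|^2}.
\]
Cancelling $Y/|s|^2$ and rearranging yields $Y\les s_1^{-1}(1+s_1^{-1}\sig_0)\|s\ch{\bm J}\|_{L^2(\Om_R)^3}$, which is exactly $(\ref{3.11})$. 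I do not expect any serious obstacle here; the only mild care needed is bookkeeping the dependence on $s$ and $s_1$ so that the final constant is the generic one claimed, and noting that the coercivity constant degenerates as $s_1\to0$ or $|s|\to\infty$ but that is harmless since $s_1$ is fixed and the estimate is stated for fixed $s$. The real work of the paper — turning this $s$-dependent bound into a time-domain stability estimate via the Parseval identity and the Treves lemma, and handling the PML convergence — comes afterward and is not part of this lemma.
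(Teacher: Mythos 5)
Your proposal is correct and follows essentially the same route as the paper: Lax--Milgram based on the strict coercivity estimate (\ref{coercivity}) for existence and uniqueness, then testing (\ref{3.8}) with $\ch{\bm E}^p$ and using Cauchy--Schwarz together with $\|\ch{\bm E}^p\|_{L^2(\Om_R)^3}\le|s|^{-1}\|s\ch{\bm E}^p\|_{L^2(\Om_\rho)^3}$ to obtain (\ref{3.11}). The paper's proof is exactly this argument, stated more tersely, so nothing further is needed.
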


\begin{proof}
The first part of the lemma follows easily from the Lax-Milgram theorem and
the strict coercivity of $a_p(\cdot,\cdot)$, while
the estimate (\ref{3.11}) follows from (\ref{3.8}), (\ref{coercivity}) and
the Cauchy-Schwartz inequality. This completes the proof.
\end{proof}

The well-posedness and stability of the PML problem (\ref{tpml}) can be easily established
by using Lemma \ref{lem3.1} and the energy method (cf. \cite[Theorem 3.1]{chen2008}).

\begin{theorem}\label{thm3.2}
Let $s_1=1/T$. Then the truncated PML problem (\ref{tpml}) in the time domain has a unique solution
$(\bm E^p(x,t),\bm H^p(x,t))$ with
\ben
&&\bm E^p\in L^2\big(0,T;H_0(\curl,\Om_{\rho})\big)\cap H^1\left(0,T;L^2(\Om_{\rho})^3\right),\\
&&\bm H^p\in L^2\big(0,T;H_0(\curl,\Om_{\rho})\big)\cap H^1\left(0,T;L^2(\Om_{\rho})^3\right)
\enn
and satisfying the stability estimate
\ben
&&\max\limits_{t\in[0,T]}\left[\|\pa_t\bm E^p\|_{L^2(\Om_{\rho})^3}
  +\|\nabla\times\bm E^p\|_{L^2(\Om_{\rho})^3}
  +\|\pa_t\bm H^p\|_{L^2(\Om_{\rho})^3}+\|\nabla\times\bm H^p\|_{L^2(\Om_{\rho})^3}\right]\\
&&\qquad\les (1+\sig_0T)^2\|\bm J\|_{H^1(0,T;L^2(\Om_R)^3)}.
\enn
\end{theorem}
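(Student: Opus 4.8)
The plan is to derive Theorem \ref{thm3.2} from the frequency-domain estimate of Lemma \ref{lem3.1} via the inverse Laplace transform, following the standard energy-method argument of \cite[Theorem 3.1]{chen2008}. First I would use Lemma \ref{lem3.1} with $\Rt(s)=s_1=1/T$: the bound (\ref{3.11}) shows that $\|\ch{\bm E}^p(s)\|_{H(\curl,\Om_\rho)}$ grows at most polynomially in $|s|$ (in fact it decays in $|s|$), and since $\ch{\bm E}^p(s)$ is holomorphic in $s$ on $\{\Rt(s)>0\}$ as the solution of a coercive variational problem depending analytically on $s$, Lemma \cite[Theorem 43.1]{treves1975} gives a causal time-domain solution $\bm E^p$ with $\bm E^p|_{t=0}=0$; then $\bm H^p$ is recovered from $\ch{\bm H}^p=-(\mu s)^{-1}BA\,\nabla\times\ch{\bm E}^p$ and the first equation of (\ref{tpml}). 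Uniqueness follows because any solution with the stated regularity has a Laplace transform solving (\ref{3.8}), which has a unique solution.

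Next, for the stability estimate I would work directly in the time domain with the energy method, mimicking \cite{chen2008}. Set $\ov{BA}:=(BA)^{-1}$, which is a fixed positive-definite diagonal matrix (entries depending on $s_1=1/T$ and bounded between constants and $1+\sig_0 T$). Differentiate the PML system (\ref{tpml}) once in $t$ (legitimate since, by the assumption (\ref{assumption}) with $\pa_t^j\bm J|_{t=0}=0$, the solution inherits the time-regularity of $\bm J$), take the $L^2(\Om_\rho)$ inner product of the first differentiated equation with $\pa_t\nabla\times\bm E^p$-type terms — more precisely pair $\nabla\times\pa_t\bm E^p$ with $\pa_t\bm H^p$ and $-\vep\ov{BA}\,\pa_t^2\bm E^p$ with $\pa_t\bm E^p$, and symmetrically for the magnetic equation — and add. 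The curl terms cancel by the integration-by-parts formula (\ref{curl_form}) using the boundary conditions $\bm n\times\bm E^p=0$ on $\G$ and $\hat x\times\bm E^p=0$ on $\G_\rho$ (so the boundary pairing vanishes, in contrast to the TBC case in (\ref{2.2}) where one would need Lemma \ref{lem2.2}). This yields $\frac{d}{dt}\big[\|\sqrt{\vep\,\ov{BA}}\,\pa_t\bm E^p\|^2 + \|\sqrt{\mu\,\ov{BA}}\,\pa_t\bm H^p\|^2\big] \lesssim \|\pa_t\bm J\|\,\|\pa_t\bm E^p\|$, and Gronwall's inequality together with the positive-definiteness bounds on $\ov{BA}$ controls $\max_{[0,T]}(\|\pa_t\bm E^p\|+\|\pa_t\bm H^p\|)$ by $(1+\sig_0 T)^{?}\|\bm J\|_{H^1(0,T;L^2)}$. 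The curls of the fields are then recovered pointwise in $t$ from the equations: $\nabla\times\bm E^p = -\mu\,\ov{BA}\,\pa_t\bm H^p$ and $\nabla\times\bm H^p = \bm J+\vep\,\ov{BA}\,\pa_t\bm E^p$, each contributing at most a factor from $\|\ov{BA}\|\lesssim 1+\sig_0 T$.

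The main obstacle is tracking the precise power of $(1+\sig_0 T)$. The matrices $A$ and $B$ contain $\al = 1+s_1^{-1}\sig$ and $\bt = 1+s_1^{-1}\hat\sig$ with $s_1^{-1}=T$, so $BA=\diag\{\al\bt^{-2},\al^{-1},\al^{-1}\}$ has entries bounded above by $1+\sig_0 T$ (from $\al\bt^{-2}\le\al\le 1+\sig_0 T$ since $\bt\ge 1$) and below by $(1+\sig_0 T)^{-1}$ (from $\al^{-1}$, and from $\al\bt^{-2}\ge (1+\sig_0T)^{-1}\cdot 1\cdot\bt^{-2}$... which requires noting $\bt\le\al$ hence $\al\bt^{-2}\ge\al^{-1}\ge(1+\sig_0T)^{-1}$). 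Thus $\ov{BA}=(BA)^{-1}$ also has all entries in $[(1+\sig_0T)^{-1},1+\sig_0T]$. In the energy identity the coercivity constant of $\|\sqrt{\vep\,\ov{BA}}\,\pa_t\bm E^p\|^2$ relative to $\|\pa_t\bm E^p\|^2$ is $\gtrsim (1+\sig_0 T)^{-1}$, costing one factor, and recovering $\nabla\times\bm E^p,\nabla\times\bm H^p$ costs another factor of $\|\ov{BA}\|\lesssim 1+\sig_0 T$; the Gronwall step with the fixed (T-dependent but $\sig_0$-independent) constant $e^{CT}$ is harmless. Carefully combining these yields the stated exponent $2$. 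A secondary technical point is justifying the time-differentiation and the use of (\ref{curl_form}) at the level of the weak solution; this is handled by a density/Galerkin argument exactly as in \cite[Theorem 3.1]{chen2008}, and by the improved regularity noted after (\ref{assumption1}).
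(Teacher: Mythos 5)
Your proposal follows essentially the same route the paper intends: the paper gives no detailed proof of Theorem \ref{thm3.2}, saying only that it "can be easily established by using Lemma \ref{lem3.1} and the energy method (cf. \cite[Theorem 3.1]{chen2008})," which is precisely your combination of $s$-domain solvability (Lemma \ref{lem3.1}, holomorphy and the Tr\`eves lemma for causal inversion) with a time-domain energy identity for the differentiated system, using the PEC conditions on both $\G$ and $\G_\rho$ to kill the boundary term and the spectral bounds $(1+\sigma_0T)^{-1}\lesssim (BA)^{\pm1}\lesssim 1+\sigma_0T$ to track the constant. One minor remark: no Gronwall factor $e^{CT}$ enters (nor would it be acceptable here, since $T$-dependence is tracked polynomially); the integrated energy identity with Cauchy--Schwarz applied to $(\pa_t\bm J,\pa_t\bm E^p)$ already yields the stated estimate.
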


We now prove the well-posedness and stability of the solution in the PML layer $\Om^{\rm PML}$
which is needed for the convergence analysis of the PML method. Consider the initial boundary
value problem in the PML layer:
\be\label{3.12}
\begin{cases}
\ds\nabla\times\bm u+\mu(BA)^{-1}\frac{\pa\bm{v}}{\pa t}=\0 &\gin\;\;\Om^{\rm PML}\times(0,T),\\
\ds\nabla\times\bm v-\vep(BA)^{-1}\frac{\pa\bm u}{\pa t}=\0 &\gin\;\;\Om^{\rm PML}\times(0,T),\\
\ds\hat{x}\times\bm u=\0 &\on\;\;\G_R\times(0,T),\\
\ds\hat{x}\times\bm u=\bm\xi &\on\;\;\G_{\rho}\times(0,T),\\
\ds\bm u(x,0)=\bm v(x,0)=\0 &\gin\;\;\Om^{\rm PML}.
\end{cases}
\en
Taking the Laplace transform of (\ref{3.12}) with $\Rt(s)=s_1$
with respect to $t$ and eliminating $\ch{\bm v}$ give
\be\label{3.13}
\begin{cases}
\ds\nabla\times\left[(\mu s)^{-1}BA\nabla\times\ch{\bm u}\right]
+\vep s(BA)^{-1}\ch{\bm u}=\0\;\;&\gin\;\;\;\Om^{\rm PML},\\ 
\ds\hat{x}\times\ch{\bm u}=\0\;\;&\on\;\;\;\G_R,\\  
\ds\hat{x}\times\ch{\bm u}=\ch{\bm\xi}\;\;&\on\;\;\;\G_{\rho}.
\end{cases}
\en
Define the sesquilinear form
$a^{\rm PML}: H_{\G_R}(\curl,\Om^{\rm PML})\times H_{\G_R}(\curl,\Om^{\rm PML})\rightarrow\C$:
\be\label{3.14}
a^{\rm PML}(\ch{\bm u},\bm V):=\int_{\Om^{\rm PML}}(\mu s)^{-1}BA
(\nabla\times\ch{\bm u})\cdot(\nabla\times\ov{\bm{V}})dx
+\int_{\Om^{\rm PML}}s\vep (BA)^{-1}\ch{\bm u}\cdot\ov{\bm{V}}dx.\no\\
\en
Then the variational formulation of (\ref{3.13}) is as follows:
Given $\ch{\bm\xi}\in H^{-1/2}(\Dive,\G_\rho)$, find $\ch{\bm u}\in H_{\G_R}(\curl,\Om^{\rm PML})$
such that $\hat{x}\times\ch{\bm u}=\ch{\bm\xi}\;\on\;\G_{\rho}$ and
\be\label{3.14a}
a^{\rm PML}(\ch{\bm u},\bm V)=0,\;\;\;\forall\;\bm V\in H_0(\curl,\Om^{\rm PML}).
\en
Arguing similarly as in proving (\ref{coercivity}), we obtain that
for any $\bm V\in H_0(\curl,\Om^{\rm PML})$,
\be\label{3.15}
\Rt\left[a^{\rm PML}(\bm V,\bm V\right]
\gtrsim \frac{1}{1+s_1^{-1}\sig_0}\frac{s_1}{|s|^2}\left[\|\nabla\times\bm V\|_{L^2(\Om^{\rm PML})^3}^2
+\|s\bm V\|_{L^2(\Om^{\rm PML})^3}^2\right].\;\;\;
\en
Assume that $\bm\xi$ can be extended to a function in $H^2(0,\infty;H^{-1/2}(\Dive,\G_{\rho}))$ such that
\be\label{3.20}
\|\bm\xi\|_{H^2(0,\infty;H^{-1/2}(\Dive,\G_{\rho}))}&\les\|\bm\xi\|_{H^2(0,T;H^{-1/2}(\Dive,\G_{\rho}))}.
\en

By the Lax-Milgram theorem together with (\ref{3.15}) we know that the variational problem
(\ref{3.14a}) has a unique solution and thus the PML system (\ref{3.12}) is well-posed
(cf. the proof of Theorem \ref{thm3.2}). We now have the following stability result for the solution
to the PML system (\ref{3.12}).

\begin{theorem}\label{thm3.3}
Let $s_1=1/T$ and let $(\bm u,\bm v)$ be the solution of $(\ref{3.12})$. Then
\be\label{3.15a}
&&\|\pa_t\bm u\|_{L^2(0,T;L^2(\Om^{\rm PML})^3)}+\|\nabla\times\bm u\|_{L^2(0,T;L^2(\Om^{\rm PML})^3)}\no\\
&&\qquad\qquad\les(1+\sig_0T)^2T\|\bm\xi\|_{H^2(0,T;H^{-1/2}(\Dive,\G_\rho))},\\ \label{3.15b}
&&\|\pa_t\bm v\|_{L^2(0,T;L^2(\Om^{\rm PML})^3)}+\|\nabla\times\bm v\|_{L^2(0,T;L^2(\Om^{\rm PML})^3)}\no\\
&&\qquad\qquad\les(1+\sig_0T)^3T\|\bm\xi\|_{H^2(0,T;H^{-1/2}(\Dive,\G_\rho))}.
\en
\end{theorem}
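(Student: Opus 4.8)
The plan is to work in the Laplace domain and transfer all estimates back by means of the Parseval identity (\ref{A.5}), exactly as in the derivation of Theorem \ref{thm3.2}. First I would homogenize the boundary data: pick a lifting $\ch{\bm w}\in H_{\G_R}(\curl,\Om^{\rm PML})$ of $\ch{\bm\xi}$ on $\G_\rho$ with the norm bound $\|\ch{\bm w}\|_{H(\curl,\Om^{\rm PML})}\lesssim\|\ch{\bm\xi}\|_{H^{-1/2}(\Dive,\G_\rho)}$, coming from the surjectivity of the trace operator $\g_t$, and write $\ch{\bm u}=\ch{\bm u}_0+\ch{\bm w}$ with $\ch{\bm u}_0\in H_0(\curl,\Om^{\rm PML})$. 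Testing the variational identity (\ref{3.14a}) against $\bm V=\ch{\bm u}_0$ gives $a^{\rm PML}(\ch{\bm u}_0,\ch{\bm u}_0)=-a^{\rm PML}(\ch{\bm w},\ch{\bm u}_0)$; bounding the right-hand side by Cauchy--Schwarz using $|(\mu s)^{-1}BA|\lesssim (s_1|s|)^{-1}(1+s_1^{-1}\sig_0)$ and $|s\vep(BA)^{-1}|\lesssim |s|(1+s_1^{-1}\sig_0)$, and combining with the coercivity estimate (\ref{3.15}) for $\ch{\bm u}_0$, yields after absorbing the cross terms
\be\label{proof-u0}
\|\nabla\times\ch{\bm u}_0\|_{L^2(\Om^{\rm PML})^3}+\|s\ch{\bm u}_0\|_{L^2(\Om^{\rm PML})^3}
\lesssim (1+s_1^{-1}\sig_0)^2\,|s|\,\|\ch{\bm w}\|_{H(\curl,\Om^{\rm PML})}.
\en
Adding back $\ch{\bm w}$ and using $s^2\ch{\bm w}$ estimates from the lifting gives the same bound for $\ch{\bm u}$ itself, namely $\|\nabla\times\ch{\bm u}\|+\|s\ch{\bm u}\|\lesssim (1+s_1^{-1}\sig_0)^2|s|\,\|\ch{\bm\xi}\|_{H^{-1/2}(\Dive,\G_\rho)}$ on $\G_\rho$ (here one should lift $s\ch{\bm\xi}$ or $s^2\ch{\bm\xi}$ rather than $\ch{\bm\xi}$ so that the $|s|$-powers match what is needed).

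Next I would convert this frequency estimate into the time-domain bound (\ref{3.15a}). Write $\pa_t\bm u = \mathscr{L}^{-1}(s\ch{\bm u})$ after subtracting the (zero) initial value; by Parseval,
\be\label{proof-parseval}
\int_0^\infty e^{-2s_1t}\|\pa_t\bm u(t)\|_{L^2(\Om^{\rm PML})^3}^2\,dt
=\frac{1}{2\pi}\int_{-\infty}^{\infty}\|s\ch{\bm u}(s)\|_{L^2(\Om^{\rm PML})^3}^2\,ds_2,
\en
and similarly for $\nabla\times\bm u$. Feeding (\ref{proof-u0}) into the right-hand side and noting $|s|\le s_1+|s_2|$, one bounds the integral by $(1+s_1^{-1}\sig_0)^4$ times $\int_{-\infty}^\infty |s|^2\|\ch{\bm\xi}(s)\|^2_{H^{-1/2}(\Dive,\G_\rho)}ds_2$, which by Parseval again equals (up to $2\pi$) $\int_0^\infty e^{-2s_1t}\|\pa_t\bm\xi\|^2\,dt$. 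Since the time integrals are over $[0,T]$ after restriction and $e^{-2s_1t}\ge e^{-2}$ there with $s_1=1/T$, replacing $e^{-2s_1t}$ by a constant on the left and bounding it by $1$ on the right costs only a fixed factor; the extra factor $T$ in (\ref{3.15a}) comes from this normalization together with $s_1^{-1}=T$ appearing once more when one of the $|s|$ powers is traded for a derivative less than is available. This gives (\ref{3.15a}) with the stated $(1+\sig_0 T)^2T$ dependence; one needs two $t$-derivatives of $\bm\xi$ (hence the space $H^2(0,T;\cdot)$ and the extension hypothesis (\ref{3.20})) precisely to have two spare powers of $|s|$ to distribute.

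Finally, for $\bm v$ I would use the first PML equation in (\ref{3.12}), $\mu(BA)^{-1}\pa_t\bm v=-\nabla\times\bm u$, equivalently $\mu s(BA)^{-1}\ch{\bm v}=-\nabla\times\ch{\bm u}$ in the Laplace domain, so that $\|s\ch{\bm v}\|_{L^2}\lesssim (1+s_1^{-1}\sig_0)\|\nabla\times\ch{\bm u}\|_{L^2}$ — this is the source of the \emph{extra} power of $(1+\sig_0 T)$ in (\ref{3.15b}) relative to (\ref{3.15a}). For $\nabla\times\bm v$ one uses the second PML equation $\nabla\times\ch{\bm v}=\vep s(BA)^{-1}\ch{\bm u}$, giving $\|\nabla\times\ch{\bm v}\|_{L^2}\lesssim (1+s_1^{-1}\sig_0)\|s\ch{\bm u}\|_{L^2}$; then apply Parseval as before. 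The main obstacle is bookkeeping the powers of $s$ and of $(1+s_1^{-1}\sig_0)=(1+\sig_0 T)$: one must choose which derivative of $\bm\xi$ to lift, keep track of the $(BA)^{\pm1}$ bounds (which each contribute $(1+s_1^{-1}\sig_0)$ in the worst diagonal entry), and verify that the integrals $\int_{\mathbb R}|s|^{2j}\|\ch{\bm\xi}\|^2ds_2$ that arise are controlled by $\|\bm\xi\|_{H^j(0,T;H^{-1/2}(\Dive,\G_\rho))}$ via (\ref{3.20}); everything else is a routine imitation of the proof of Theorem \ref{thm3.2}.
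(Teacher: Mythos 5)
Your plan is essentially the paper's own proof: the same lifting of $\ch{\bm\xi}$ into $H_{\G_R}(\curl,\Om^{\rm PML})$, the same coercivity-plus-Cauchy--Schwarz argument for $a^{\rm PML}$ in the $s$-domain, the same return to the time domain via the Parseval identity with $s_1=1/T$ and the extension assumption (\ref{3.20}), and the same derivation of (\ref{3.15b}) from (\ref{3.15a}) through the two equations in (\ref{3.12}), which costs exactly one extra factor $(1+\sig_0T)$ because the entries of $BA$ and $(BA)^{-1}$ are only bounded by $1+s_1^{-1}\sig_0$. The only point to repair is the constant bookkeeping in your displayed intermediate bound: balancing the coercivity constant $s_1(1+s_1^{-1}\sig_0)^{-1}|s|^{-2}$ from (\ref{3.15}) against the continuity bound $(1+s_1^{-1}\sig_0)\,|s|^{-1}(1+|s|^2)^{1/2}\,\|\ch{\bm w}\|_{H(\curl,\Om^{\rm PML})}$ of $a^{\rm PML}$, and using the trace theorem for the lifting, gives
\ben
\|\nabla\times\ch{\bm u}\|_{L^2(\Om^{\rm PML})^3}+\|s\ch{\bm u}\|_{L^2(\Om^{\rm PML})^3}
\les s_1^{-1}(1+s_1^{-1}\sig_0)^2\,|s|(1+|s|)\,\|\ch{\bm\xi}\|_{H^{-1/2}(\Dive,\G_\rho)},
\enn
not $(1+s_1^{-1}\sig_0)^2|s|\,\|\ch{\bm\xi}\|_{H^{-1/2}(\Dive,\G_\rho)}$ as you wrote. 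The factor $s_1^{-1}(1+|s|)$ you dropped is precisely the source of the theorem's factor $T$ (since $s_1^{-1}=T$) and of the need for two time derivatives of $\bm\xi$ (hence the space $H^2(0,T;H^{-1/2}(\Dive,\G_\rho))$) after applying Parseval; it does not come from the normalization $e^{2s_1T}=e^2$, which only costs an absolute constant, and ``lifting $s\ch{\bm\xi}$ or $s^2\ch{\bm\xi}$ instead of $\ch{\bm\xi}$'' changes nothing since $s$ is a scalar with respect to the spatial lifting. With that correction your accounting of the powers of $|s|$ and $(1+\sig_0T)$ closes and the argument coincides with the paper's.
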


\begin{proof}
Let $\bm u_0\in H_{\G_R}(\curl,\Om^{\PML})$ be such that $\hat{x}\times\bm u_0=\ch{\bm\xi}$
on $\G_{\rho}$.
Then, by (\ref{3.14a}) we have $\bm\om:=\ch{\bm u}-\bm u_0\in H_0(\curl,\Om^{\PML})$ and
\be\label{3.15a+}
a^{\rm PML}(\bm\om,\bm V)=-a^{\rm PML}(\bm u_0,\bm V),\;\;\;\forall\;\bm V\in H_0(\curl,\Om^{\rm PML}).
\en
This, combined with (\ref{3.14})-(\ref{3.15}) and the Cauchy-Schwartz inequality, gives
\ben
&&\frac{1}{1+s_1^{-1}\sig_0}\frac{s_1}{|s|^2}\left(\|\nabla\times\bm\om\|_{L^2(\Om^{\rm PML})^3}^2
+\|s\bm\om\|_{L^2(\Om^{\rm PML})^3}^2\right)\\
&&\les{}\Rt\left[a^{\rm PML}(\bm\om,\bm\om)\right]\\
&&\les{}\frac{(1+s_1^{-1}\sig_0)}{|s|}\sqrt{1+|s|^2}\left(\|\nabla\times\bm\om\|_{L^2(\Om^{\rm PML})^3}^2
+\|s\bm\om\|_{L^2(\Om^{\rm PML})^3}^2\right)^{1/2}\|\bm u_0\|_{H(\curl,\Om^{\PML})},
\enn
so
\ben
\|\nabla\times\bm\om\|_{L^2(\Om^{\rm PML})^3}^2+\|s\bm\om\|_{L^2(\Om^{\rm PML})^3}^2
\les\frac{(1+s_1^{-1}\sig_0)^4|s|^2(1+|s|^2)}{s_1^2}\|\bm u_0\|^2_{H(\curl,\Om^{\PML})}.
\enn
This, together with the definition of $\bm\om$ and the Cauchy-Schwartz inequality, implies
\ben 
\|\nabla\times\ch{\bm u}\|_{L^2(\Om^{\rm PML})^3}+\|s\ch{\bm u}\|_{L^2(\Om^{\rm PML})^3}
\les\frac{(1+s_1^{-1}\sigma_0)^2|s|(1+|s|)}{s_1}\|\bm u_0\|_{H(\curl,\Om^{\PML})}.
\enn
By the trace theorem we have
\be\no
&&\|\nabla\times\ch{\bm u}\|_{L^2(\Om^{\rm PML})^3}+\|s\ch{\bm u}\|_{L^2(\Om^{\rm PML})^3}\\ \label{3.16}
&&\qquad\qquad\les{s_1^{-1}}{(1+s_1^{-1}\sigma_0)^2}|s|(1+|s|)\|\ch{\bm\xi}\|_{H^{-1/2}(\Dive,\G_\rho)}.
\en
By (\ref{3.16}) and the Parseval equality (\ref{A.5}) it follows that
\ben
&&\int_0^T\left(\|\nabla\times\bm u\|_{L^2(\Om^{\rm PML})^3}^2
+\|\pa_t\bm u\|_{L^2(\Om^{\rm PML})^3}^2\right)dt\\
&&\le e^{2s_1T}\int_0^\infty e^{-2s_1t}\left(\|\nabla\times\bm u\|_{L^2(\Om^{\rm PML})^3}^2
+\|\pa_t\bm u\|_{L^2(\Om^{\rm PML})^3}^2\right)dt\\
&&\les 2\pi e^{2s_1T}s_1^{-2}(1+s_1^{-1}\sigma_0)^4\int_{-\infty}^{+\infty}|s|^2(1+|s|^2)
\|\ch{\bm\xi}\|^2_{H^{-1/2}(\Dive,\G_\rho)}ds_2\\
&&= e^{2s_1T}s_1^{-2}(1+s_1^{-1}\sigma_0)^4\int_0^\infty e^{-2s_1t}
\left(\|\pa_t\bm\xi\|^2_{H^{-1/2}(\Dive,\G_\rho)}
+\|\pa_t^2\bm\xi\|^2_{H^{-1/2}(\Dive,\G_\rho)}\right)dt\\
&&\les e^{2s_1T}s_1^{-2}(1+s_1^{-1}\sigma_0)^4\int_0^T\left(\|\pa_t\bm\xi\|^2_{H^{-1/2}(\Dive,\G_\rho)}
+\|\pa_t^2\bm\xi\|^2_{H^{-1/2}(\Dive,\G_\rho)}\right)dt,
\enn
where we have used (\ref{3.20}) to get the last inequality.

The required estimate (\ref{3.15a}) then follows from the above inequality with $s_1^{-1}=T$.
The required inequality (\ref{3.15b}) follows from (\ref{3.15a}) and the Maxwell equations
in (\ref{3.12}). The proof is thus complete.
\end{proof}

\section{Exponential convergence of the PML method}\label{sec5}

In this section, we prove the exponential convergence of the PML method.
We first start with the following lemma which was proved in \cite[Lemma 4.1]{CW2012} for
the two-dimensional case. The three-dimensional case can be easily proved similarly.

\begin{lemma}\label{lem4.1}
For any $z_j=a_j+ib_j$ with $a_j,b_j\in\R$ such that $b_1^2+b_2^2+b_3^2>0$, $j=1,2,3$, we have
\ben
\Rt\left[(z_1^2+z_2^2+z_3^2)^{1/2}\right]\geq\frac{|a_1b_1+a_2b_2+a_3b_3|}{\sqrt{b_1^2+b_2^2+b_3^2}}.
\enn
\end{lemma}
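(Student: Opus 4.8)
The plan is to reduce the claim to an elementary estimate on a single complex square root and then exploit the structure $z_j = a_j + i b_j$ by writing the quantity under the root in terms of its real and imaginary parts. First I would set $w := z_1^2 + z_2^2 + z_3^2 = A + iB$, where $A = \sum_j (a_j^2 - b_j^2)$ and $B = 2\sum_j a_j b_j$. By hypothesis the vector $(b_1,b_2,b_3)$ is nonzero, so $|B| = 2|a_1b_1 + a_2b_2 + a_3b_3|$, and the target inequality becomes $\Rt(w^{1/2}) \ge |B| / \big(2\sqrt{b_1^2+b_2^2+b_3^2}\big)$. The branch of the square root is the principal one with $\Rt(w^{1/2}) > 0$ for $w \notin (-\infty,0]$, so if $p := \Rt(w^{1/2})$ and $q := \I(w^{1/2})$ then $p^2 - q^2 = A$, $2pq = B$, and $p \ge 0$.

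Next I would derive the standard closed form $p = \sqrt{\tfrac{|w| + A}{2}}$, valid because $p \ge 0$ and $p^2 - q^2 = A$ together with $p^2 + q^2 = |w|$. Hence it suffices to show $2|w| + 2A \ge B^2/(b_1^2 + b_2^2 + b_3^2)$, i.e. $(b_1^2+b_2^2+b_3^2)\big(|w| + A\big) \ge 2(a_1b_1+a_2b_2+a_3b_3)^2$. Since $|w| \ge |A|$ one always has $|w| + A \ge 0$; the content is the quantitative bound. I would prove it by noting $|w| \ge |B| = 2|\langle a,b\rangle|$ is too weak on its own, so instead I would work with $|w|^2 = A^2 + B^2$ and aim to show $\big(|w| + A\big)\,|b|^2 \ge \tfrac12 B^2$ where $|b|^2 := b_1^2+b_2^2+b_3^2$. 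Multiplying through by $|w| - A \ge 0$ (if $|w| = A$ then $B = 0$ and there is nothing to prove), this is equivalent to $B^2 |b|^2 = (|w|^2 - A^2)|b|^2 \ge \tfrac12 B^2 (|w| - A)$, i.e. $2|b|^2 \ge |w| - A = |w| - \sum_j(a_j^2 - b_j^2)$.

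The main obstacle is this last inequality $|w| \le A + 2|b|^2 = \sum_j a_j^2 + \sum_j b_j^2 = |a|^2 + |b|^2$, which I would establish directly: $|w| = |z_1^2 + z_2^2 + z_3^2| = \big| \sum_j (a_j + ib_j)^2 \big|$, and writing $z = a + ib \in \mathbb{C}^3$ this is $|z \cdot z|$ where $z \cdot z$ is the bilinear (not Hermitian) dot product; by Cauchy–Schwarz for the Hermitian inner product, $|z \cdot z| \le \|z\|^2 = \sum_j |z_j|^2 = |a|^2 + |b|^2$. That closes the chain. The only care needed is the degenerate cases $B = 0$ and $|w| = A$, which are handled as indicated above, plus checking that every division (by $|w| - A$, by $|b|^2$) is by a strictly positive quantity except in those already-dispatched cases. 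I expect the whole argument to be short once the reduction to $|z\cdot z| \le \|z\|^2$ is spotted; that Cauchy–Schwarz step is the crux.
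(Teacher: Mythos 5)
Your proof is correct. Note that the paper itself does not spell out an argument for this lemma: it simply cites the two-dimensional version (Lemma 4.1 of the Chen--Wu paper \cite{CW2012}) and asserts that the three-dimensional case "can be easily proved similarly." Your write-up supplies exactly the details that are being deferred, and it rests on the same elementary mechanism as the cited proof: with $w=z_1^2+z_2^2+z_3^2=A+iB$, the real and imaginary parts $p,q$ of $w^{1/2}$ satisfy $p^2-q^2=A$, $2pq=B$, and everything reduces to the bound $|w|\le\sum_j(a_j^2+b_j^2)$, i.e.\ $|w|-A\le 2(b_1^2+b_2^2+b_3^2)$, which you get from $\bigl|\sum_j z_j^2\bigr|\le\sum_j|z_j|^2$. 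The cited proof packages this as $q^2\le b_1^2+b_2^2+b_3^2$ together with $pq=a_1b_1+a_2b_2+a_3b_3$, whereas you work with the closed form $p=\sqrt{(|w|+A)/2}$; these are the same computation in different clothing. Two cosmetic remarks: the multiplication by $|w|-A$ and the attendant case split can be avoided entirely by writing $\tfrac12 B^2=\tfrac12(|w|-A)(|w|+A)\le(b_1^2+b_2^2+b_3^2)(|w|+A)$ directly, since $|w|+A\ge0$; and the bound $\bigl|\sum_j z_j^2\bigr|\le\sum_j|z_j|^2$ is just the triangle inequality, so the Cauchy--Schwarz framing, while valid, is not needed. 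Your handling of the degenerate situations ($B=0$, including $w$ on the negative real axis where the branch gives vanishing real part) is adequate, since there the right-hand side of the claimed inequality is zero.
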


The following lemma is useful in the proof of the exponential decay property of the stretched
fundamental solution $\wid{\Phi}_s(x,y)$.

\begin{lemma}\label{lem4.2}
Let $s=s_1+is_2$ with $s_1>0,\;s_2\in\R$. Then, for any $x\in\G_{\rho}$ and $y\in\G_R$,
the complex distance $\rho_s$ defined by (\ref{3.2}) satisfies
\ben
|\rho_s(\wid{x},y)/s|\geq d,\;\;\;\;\Rt[\rho_s(\wid{x},y)]\geq\rho\hat{\sig}(\rho),
\enn
where, by (\ref{3.8a}) $\hat{\sig}(\rho)$ is given as
\be\label{4.0}
\hat{\sig}(\rho)=\frac{1}{\rho}\int_R^{\rho}\sig(\tau)d\tau=\frac{\sig_0d}{\rho(m+1)}.
\en
\end{lemma}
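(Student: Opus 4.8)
The plan is to write out $\rho_s(\wid{x},y)^2 = s^2|\wid{x}-y|^2$ in terms of the stretched coordinates and then apply Lemma \ref{lem4.1} to the three complex numbers $z_j = s(\wid{x}_j - y_j)$. First I would observe that for $x\in\G_\rho$ we have $r=|x|=\rho$, so by (\ref{3.1a}) the stretched radius is $\wid{r} = \rho\beta(\rho) = \rho + \int_R^\rho\sig(\tau)\,d\tau = \rho(1 + s_1^{-1}\hat\sig(\rho))$ once we recall $\beta(r) = 1+s_1^{-1}\hat\sig(r)$ and that $\sig\equiv 0$ on $[0,R]$. Hence $\wid{x} = \wid{r}\,\bm e_r(\hat x) = (1+s_1^{-1}\hat\sig(\rho))\,x$, i.e. the stretching is a real positive scalar multiple of $x$ on $\G_\rho$. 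Writing $\wid{x}_j - y_j = (x_j - y_j) + s_1^{-1}\hat\sig(\rho)x_j$, and using that $s = s_1+is_2$, the imaginary part of $z_j = s(\wid{x}_j-y_j)$ is $b_j = s_2(x_j-y_j) + s_2 s_1^{-1}\hat\sig(\rho)x_j + s_1\cdot 0\cdot(\ldots)$ — more carefully, $z_j = (s_1+is_2)\big((x_j-y_j)+s_1^{-1}\hat\sig(\rho)x_j\big)$ so $a_j = s_1(x_j-y_j)+\hat\sig(\rho)x_j$ and $b_j = s_2(x_j-y_j)+s_2 s_1^{-1}\hat\sig(\rho)x_j$.

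For the first bound, $|\rho_s(\wid{x},y)/s| = |\wid{x}-y|$, and since $\wid{x} = (1+s_1^{-1}\hat\sig(\rho))x = x + s_1^{-1}\hat\sig(\rho)x$ with $x = \rho\hat x$ and $\hat\sig(\rho)\ge 0$, the point $\wid x$ lies on the ray through $x$ with $|\wid x|\ge|x|=\rho$. Then $|\wid{x}-y| \ge |\wid x| - |y| \ge \rho - R = d$, which is the first claim; here I only use $|y|=R$ for $y\in\G_R$ and the triangle inequality. For the second bound, $\Rt[\rho_s] = \Rt[(z_1^2+z_2^2+z_3^2)^{1/2}]$, and provided $b_1^2+b_2^2+b_3^2>0$ (the case $s_2=0$ or $\wid x = y$ being handled by continuity, or directly since then $\rho_s$ is real positive) Lemma \ref{lem4.1} gives
\begin{equation*}
\Rt[\rho_s(\wid{x},y)] \ge \frac{|a_1b_1+a_2b_2+a_3b_3|}{\sqrt{b_1^2+b_2^2+b_3^2}}.
\end{equation*}
The main computational step is to evaluate $\sum_j a_j b_j$. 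With $p_j := x_j - y_j$ and $q_j := \hat\sig(\rho)x_j$, we have $a_j = s_1 p_j + q_j$ and $b_j = s_2(p_j + s_1^{-1}q_j) = s_2 s_1^{-1}(s_1 p_j + q_j) = s_2 s_1^{-1}a_j$. Therefore $\sum_j a_j b_j = s_2 s_1^{-1}\sum_j a_j^2$ and $\sum_j b_j^2 = s_2^2 s_1^{-2}\sum_j a_j^2$, so the quotient equals $\sqrt{\sum_j a_j^2} = |a|$ (taking $s_2\ne0$; if $s_2=0$ then $\rho_s = s_1|\wid x - y| \ge s_1 d$ directly and $\Rt[\rho_s]=|a|$ still since $b=0$ forces us back to the real case $\Rt\rho_s = |a|$). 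Thus $\Rt[\rho_s(\wid x,y)] \ge |a| = |s_1(x-y) + \hat\sig(\rho)x|$. Finally, since $\hat\sig(\rho)\ge 0$ and $x=\rho\hat x$, decomposing along $\hat x$: the component of $s_1(x-y)+\hat\sig(\rho)x$ in the direction $\hat x$ is $s_1(\rho - y\cdot\hat x) + \hat\sig(\rho)\rho \ge \hat\sig(\rho)\rho$ because $y\cdot\hat x \le |y| = R \le \rho$ makes $\rho - y\cdot\hat x\ge 0$. Hence $|a| \ge \hat\sig(\rho)\rho = \rho\hat\sig(\rho)$, which is the second claim. The identity (\ref{4.0}) for $\hat\sig(\rho)$ is just (\ref{3.8a}) evaluated at $r=\rho$: $\int_R^\rho\sig(\tau)\,d\tau = \sig_0\int_R^\rho\big(\tfrac{\tau-R}{\rho-R}\big)^m d\tau = \tfrac{\sig_0 d}{m+1}$.

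I expect the only real subtlety is bookkeeping the degenerate case $b_1^2+b_2^2+b_3^2 = 0$ (which happens exactly when $s_2=0$, since $a$ and $b$ are parallel with ratio $s_2/s_1$, and $a=0$ is impossible as $|a|\ge\rho\hat\sig(\rho)$ could vanish only if $\hat\sig(\rho)=0$, i.e. $\sig_0=0$, excluded), so that Lemma \ref{lem4.1} does not literally apply; there one argues directly that $\rho_s = s|\wid x - y|$ is a positive real multiple of a real vector length, giving $\Rt[\rho_s] = s_1|\wid x - y| = |a|$ and the same conclusions. Everything else is the triangle inequality and the explicit form of $\beta$ and $\hat\sig$ on $\G_\rho$.
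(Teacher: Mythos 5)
Your proof is correct and takes essentially the same route as the paper's: apply Lemma \ref{lem4.1} to $z_j=s(\wid{x}_j-y_j)$ (the stretched coordinates being real), obtain $\Rt[\rho_s(\wid{x},y)]\geq s_1|\wid{x}-y|$, and bound $|\wid{x}-y|$ and $s_1|\wid{x}-y|$ below using $\wid{\rho}=\rho(1+s_1^{-1}\hat{\sig}(\rho))$; your triangle-inequality and $\hat{x}$-projection steps are equivalent to the paper's use of $|\wid{x}-y|\geq\wid{\rho}-R$. Your explicit treatment of the degenerate case $s_2=0$ (where the hypothesis $b_1^2+b_2^2+b_3^2>0$ of Lemma \ref{lem4.1} fails) is a small refinement the paper passes over silently.
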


\begin{proof}
For $x\in\G_{\rho}$ and $y\in\G_R$, write $\wid{x}=(\wid{x}_1,\wid{x}_2,\wid{x}_3)$
and $y=(y_1,y_2,y_3)$ in the spherical coordinates with
\ben
&&\wid{x}_1=\wid{\rho}\sin\theta_1\cos\phi_1,\;\;\;\wid{x}_2=\wid{\rho}\sin\theta_1\sin\phi_1,
\;\;\;\wid{x}_3=\wid{\rho}\cos\theta_1,\\
&&y_1=R\sin\theta_2\cos\phi_2,\;\;\;y_2=R\sin\theta_2\sin\phi_2,\;\;\;y_3=R\cos\theta_2,
\enn
where $\wid{x}$ is the stretched coordinates of $x=(x_1,x_2,x_3)$ and $\wid{\rho}$
denotes the real stretched radius of $\rho=|x|$ defined similarly as in (\ref{3.1b}).
Then, by the definition of the complex distance $\rho_s(\wid{x},y)$ (see (\ref{3.2})) we have
\ben
|\rho_s(\wid{x},y)/s|&=&|\wid{x}-y|=\sqrt{(\wid{x}_1-y_1)^2+(\wid{x}_2-y_2)^2+(\wid{x}_3-y_3)^2}\\
&=&\sqrt{\wid{\rho}^2+R^2-2\wid{\rho}R[\sin\theta_1\sin\theta_2\cos(\phi_1-\phi_2)
+\cos\theta_1\cos\theta_2]}\\
&\ge&\wid{\rho}-R\geq\rho-R.
\enn
In addition, by Lemma \ref{lem4.1} we know that
\ben
\Rt\left[\rho_s(\wid{x},y)\right]&=&\Rt\left[s^2\left((\wid{x}_1-y_1)^2+(\wid{x}_2-y_2)^2
+(\wid{x}_3-y_3)^2\right)\right]^{1/2}\\
&\ge&\frac{|s_1s_2(\wid{x}_1-y_1)^2+s_1s_2(\wid{x}_2-y_2)^2+s_1s_2(\wid{x}_3-y_3)^2|}
{\sqrt{s_2^2(\wid{x}_1-y_1)^2+s_2^2(\wid{x}_2-y_2)^2+s_2^2(\wid{x}_3-y_3)^2}}\\
&=& s_1\sqrt{(\wid{x}_1-y_1)^2+(\wid{x}_2-y_2)^2+(\wid{x}_3-y_3)^2}\\
&\ge& s_1(\wid{\rho}-R)\\
&\ge&\rho\hat{\sig}(\rho).
\enn
This completes the proof.
\end{proof}

The following lemma gives the estimates of the stretched dyadic Green's function $\wid{\mathbb{G}}$
of the PML equation which plays a key role in the convergence analysis of the PML method.

\begin{lemma}\label{lem4.3}
Assume that the conditions in $(\ref{thick})$ are satisfied. Then we have that
for $x\in\G_{\rho}$, $y\in\G_{R}$,
\be\label{4.1}
&&\left|\wid{\mathbb{G}}(s,x,y)\right|
\les s_1^{-2}d^{-1}(1+s_1^{-1}\sigma_0)^2e^{-\sqrt{\vep\mu}\rho\hat{\sigma}(\rho)},\\ \label{4.1a}
&&\left|\curl_{\wid{x}}\wid{\mathbb{G}}(s,x,y)\right|,\;\;
\left|\curl_y\wid{\mathbb{G}}(s,x,y)\right|
\les d^{-1}(1+|s|)(1+s_1^{-1}\sigma_0)e^{-\sqrt{\vep\mu}\rho\hat{\sigma}(\rho)},\\ \label{4.1b}
&&\left|\curl_{\wid{x}}\curl_{y}\wid{\mathbb{G}}(s,x,y)\right|,\;\;
\left|\curl_{y}\curl_{y}\wid{\mathbb{G}}(s,x,y)\right|\no\\
&&\hspace{4cm}\les (1+|s|^2)(1+s_1^{-1}\sigma_0)^2d^{-1}e^{-\sqrt{\vep\mu}\rho\hat{\sigma}(\rho)},\\ \label{4.1c}
&&\left|\curl_{\wid{x}}\curl_{y}\curl_{y}\wid{\mathbb{G}}(s,x,y)\right|
\les (1+|s|^3)d^{-1}(1+s_1^{-1}\sigma_0)^3e^{-\sqrt{\vep\mu}\rho\hat{\sigma}(\rho)},
\en
where $\wid{\mathbb{G}}$ is the stretched dyadic Green's function and $s=s_1+is_2\in\C_+$.
\end{lemma}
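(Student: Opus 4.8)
\textbf{Proof proposal for Lemma \ref{lem4.3}.}

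The plan is to differentiate the stretched fundamental solution $\wid{\Phi}_s(x,y)$ explicitly and then bound each resulting term using the two geometric estimates supplied by Lemma \ref{lem4.2}, namely $|\rho_s(\wid{x},y)/s|\geq d$ and $\Rt[\rho_s(\wid{x},y)]\geq\rho\hat{\sig}(\rho)$. First I would record that, writing $\rho_s=\rho_s(\wid{x},y)$, the exponential factor satisfies $|e^{-\sqrt{\vep\mu}\rho_s}|=e^{-\sqrt{\vep\mu}\Rt(\rho_s)}\leq e^{-\sqrt{\vep\mu}\rho\hat{\sig}(\rho)}$, which produces the decaying factor appearing in every bound. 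Next, from $(\ref{3.3})$ we have $\wid{\Phi}_s(x,y)=\dfrac{s\,e^{-\sqrt{\vep\mu}\rho_s}}{4\pi\rho_s}$, so $|\wid{\Phi}_s(x,y)|\leq\dfrac{|s|}{4\pi|\rho_s|}\,e^{-\sqrt{\vep\mu}\rho\hat{\sig}(\rho)}\leq\dfrac{1}{4\pi d}\,e^{-\sqrt{\vep\mu}\rho\hat{\sig}(\rho)}$, using $|\rho_s/s|\geq d$. To estimate $\wid{\mathbb{G}}$ itself via $(\ref{3.3a})$ I must also control $k^{-2}\nabla_y\nabla_y\wid{\Phi}_s$. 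Differentiating, each $\pa_{y_l}$ acting on $\rho_s$ contributes a factor $\pa_{y_l}\rho_s = -s^2(\wid{x}_l-y_l)/\rho_s$, whose modulus is $\leq|s|$ since $|\wid{x}-y|=|\rho_s/s|$ and hence $|s^2(\wid{x}_l-y_l)/\rho_s|\leq|s|$; meanwhile the prefactor $1/\rho_s$ or $e^{-\sqrt{\vep\mu}\rho_s}/\rho_s$ always carries a $1/|\rho_s|\leq (d|s|)^{-1}$. Collecting terms, $\nabla_y\nabla_y\wid{\Phi}_s$ is a sum of terms of the form $s^{2}\cdot(\text{bounded})\cdot|\rho_s|^{-1}$ plus lower-order ones, so $|\nabla_y\nabla_y\wid{\Phi}_s|\les |s|^2\cdot(d|s|)^{-1}\cdot |s|^{-1}\cdot(\text{correction factors})\,e^{-\sqrt{\vep\mu}\rho\hat\sig(\rho)}$; after multiplying by $k^{-2}=-(\vep\mu s^2)^{-1}$ this gives the claimed $s_1^{-2}d^{-1}(1+s_1^{-1}\sig_0)^2 e^{-\sqrt{\vep\mu}\rho\hat\sig(\rho)}$ in $(\ref{4.1})$, where the powers of $(1+s_1^{-1}\sig_0)$ come from bounding $|\wid\rho/\rho|=|\bt(\rho)|\leq 1+s_1^{-1}\sig_0$ and similar ratios entering through the stretched coordinates $\wid x$.

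For the curl estimates $(\ref{4.1a})$--$(\ref{4.1c})$ I would proceed systematically: each application of $\curl_y$ or $\curl_{\wid x}$ differentiates $\wid{\mathbb{G}}$ once more with respect to a (possibly stretched) spatial variable, and by the chain rule $\curl_{\wid x}=A(\rho)\,\curl_x\,B(\rho)$-type relations (using $\wid x = \wid\rho\,\hat x$ on $\G_\rho$), so each such derivative again contributes at most a factor $|s|$ from differentiating $\rho_s$, an extra $|\rho_s|^{-1}\leq(d|s|)^{-1}$ from the resulting prefactor, and a bounded number of factors $(1+s_1^{-1}\sig_0)$ from the stretching matrices $A,B$ and the ratio $\wid\rho/\rho$. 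Starting from $|\wid{\mathbb{G}}|\les s_1^{-2}d^{-1}(1+s_1^{-1}\sig_0)^2$: one curl turns this into $\les |s|\cdot|s|\cdot s_1^{-2}\cdot(1+s_1^{-1}\sig_0)\cdot d^{-1}$ — but here one must be careful because the $k^{-2}=(\vep\mu s^2)^{-1}$ in the second term of $\wid{\mathbb{G}}$ and the $|s|^2$ from two derivatives on the leading term $\wid\Phi_s$ combine differently; tracking this carefully yields $(1+|s|)(1+s_1^{-1}\sig_0)d^{-1}$ for a single curl, $(1+|s|^2)(1+s_1^{-1}\sig_0)^2 d^{-1}$ for two curls, and $(1+|s|^3)(1+s_1^{-1}\sig_0)^3 d^{-1}$ for three curls, exactly as stated. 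I would organize this as: (i) an explicit formula for the derivatives $\pa_{y_{l_1}}\cdots\pa_{y_{l_j}}\wid\Phi_s$ as a finite sum with each summand a product of a power $s^{2p}$, a polynomial in the bounded quantities $s^2(\wid x_l-y_l)/\rho_s$, and a factor $e^{-\sqrt{\vep\mu}\rho_s}\,\rho_s^{-q}$ with $q\geq 1$; (ii) the uniform bounds $|s^2(\wid x_l - y_l)/\rho_s|\leq |s|$, $|\rho_s^{-1}|\leq (d|s|)^{-1}$, $|e^{-\sqrt{\vep\mu}\rho_s}|\leq e^{-\sqrt{\vep\mu}\rho\hat\sig(\rho)}$, and the stretching-ratio bounds; (iii) combining with the chain rule for $\curl_{\wid x}$ and the factor $k^{-2}$ appearing in $(\ref{3.3a})$.

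The main obstacle will be the bookkeeping in step (i)--(iii): keeping precise track of how many powers of $|s|$, how many powers of $(1+s_1^{-1}\sig_0)$, and how many powers of $d^{-1}$ accumulate under repeated differentiation, and in particular disentangling the cancellation between the $|s|$'s generated by differentiation and the $s^{-1}$ or $s^{-2}$ sitting in $\wid\Phi_s$ (through its $s$-prefactor) and in $k^{-2}=(\vep\mu s^2)^{-1}$. A secondary subtlety is that the derivatives $\pa_{y_l}$ and $\pa_{\wid x_l}$ act on slightly different variables — $y\in\G_R$ is unstretched while $\wid x$ is the stretched image of $x\in\G_\rho$ — so the chain rule relating $\curl_{\wid x}$ to Cartesian derivatives (via the diagonal matrices $A(\rho),B(\rho)$, whose entries are powers of $\al(\rho)$ and $\bt(\rho)$ and hence bounded by powers of $1+s_1^{-1}\sig_0$) must be invoked, and I would verify that on $\G_\rho$ these matrices contribute only the stated powers of $(1+s_1^{-1}\sig_0)$. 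Everything else — the exponential decay and the $d^{-1}$ — is a direct consequence of Lemma \ref{lem4.2} and requires no further ideas.
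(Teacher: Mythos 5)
Your overall strategy---write out the derivatives of $\wid{\Phi}_s$ explicitly, decompose them into powers of $s$, and bound the resulting ratios with the two estimates of Lemma \ref{lem4.2}---is the same as the paper's, and your treatment of (\ref{4.1}) is essentially sound. (A small remark: in the paper the factors $(1+s_1^{-1}\sig_0)$ come from the upper bound $|\wid{x}_j-y_j|\le\wid{\rho}+R\les(1+s_1^{-1}\sig_0)d$ combined with (\ref{thick}) and $|\rho_s/s|\ge d$, not from stretching matrices; your cruder ratio bound $|\wid{x}_j-y_j|/|\wid{x}-y|\le 1$ also works and is even sharper, so this misattribution is harmless for (\ref{4.1}). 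Likewise, no chain rule of the form $\curl_{\wid{x}}=A\,\curl_x\,B$ is needed anywhere: the curls in the lemma are taken directly in the variables $\wid{x}$ and $y$, and since $\pa_{\wid{x}_j}\wid{\Phi}_s=-\pa_{y_j}\wid{\Phi}_s$ the two derivatives coincide up to sign.)

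There is, however, a genuine gap in your argument for (\ref{4.1a})--(\ref{4.1c}). You propose to apply the curls to all of $\wid{\mathbb{G}}=\wid{\Phi}_s\mathbb{I}+k^{-2}\nabla_y\nabla_y\wid{\Phi}_s$ and then bound the outcome term by term, asserting that ``tracking this carefully'' makes the $k^{-2}$ and the $|s|$'s generated by differentiation combine into $(1+|s|)$, $(1+|s|^2)$, $(1+|s|^3)$. Term-by-term bounding cannot deliver this: a curl applied to the Hessian part produces third-order derivatives of $\wid{\Phi}_s$ divided by $k^2=-\vep\mu s^2$, and the third derivatives contain contributions of every order $s^0,\dots,s^3$; after division by $s^2$ the low-order pieces are of size $|s|^{-2}\le s_1^{-2}$, which is not dominated by $(1+|s|)(1+s_1^{-1}\sig_0)d^{-1}$ (take $s_2=0$ and $s_1=1/T$ small). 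The missing ingredient, which the paper invokes explicitly, is the structural identity that the curl of a Hessian vanishes: every column of $\nabla_y\nabla_y\wid{\Phi}_s$ is a gradient (in $y$, and equally in $\wid{x}$, since $\wid{\Phi}_s$ depends on $\wid{x}-y$), so $\curl_y$ and $\curl_{\wid{x}}$ annihilate the second term of $\wid{\mathbb{G}}$ entirely. Consequently the curl estimates involve only derivatives of $\wid{\Phi}_s\mathbb{I}$, with no $k^{-2}$ at all: one curl costs first derivatives (order $s$), two curls second derivatives (order $s^2$), three curls third derivatives (order $s^3$), exactly matching (\ref{4.1a})--(\ref{4.1c}). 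Without this observation your route yields at best bounds carrying extra factors of $s_1^{-2}=T^2$ (and higher powers of $(1+s_1^{-1}\sig_0)$), which neither proves the lemma as stated nor preserves the powers of $T$ in Theorem \ref{thm5.5}.
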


\begin{proof}
For $i,j,k=1,2,3$. By Lemma \ref{lem4.2} and the definition of the stretched fundamental
solution $\wid{\Phi}_s$ in (\ref{3.3}) we have
\be\label{4.2}
\left|\wid{\Phi}_s(x,y)\right|&=&
\frac{e^{-\sqrt{\vep\mu}\Rt[\rho_s(\wid{x},y)]}}{4\pi|\rho_s(\wid{x},y)/s|}
\le\frac{e^{-\sqrt{\vep\mu}\rho\hat{\sigma}(\rho)}}{4\pi d},\\ \label{4.2b}
-\frac{\pa\wid{\Phi}_s(x,y)}{\pa\wid{x}_j}&=&\frac{\pa\wid{\Phi}_s(x,y)}{\pa y_j}
=s\frac{\sqrt{\vep\mu}(\wid{x}_j-y_j)}{\rho_s(\wid{x},y)/s}\wid{\Phi}_s(x,y)
+\frac{(\wid{x}_j-y_j)}{[\rho_s(\wid{x},y)/s]^2}\wid{\Phi}_s(x,y)\no\\
&:=& sP_{1,j}^s+P_{0,j}^s.
\en
By the conditions in (\ref{thick}) we know that
\ben
|\wid{x}_j-y_j|\le|\wid{x}-y|\le\wid{\rho}+R
=\rho+R+s_1^{-1}\rho\hat{\sigma}(\rho)\les (1+s_1^{-1}\sigma_0)d,
\enn
and so
\be\label{4.2c}
|P_{l,j}^s|\les(1+s_1^{-1}\sigma_0)d^{-1}e^{-\sqrt{\vep\mu}\rho\hat{\sigma}(\rho)},\;\;\;l=0,1.
\en
For the second-order derivatives of $\wid{\Phi}_s$, we have
\be\label{4.2c+}
-\frac{\pa^2\wid{\Phi}_s(x,y)}{\pa \wid{x}_i\pa y_j}=\frac{\pa^2\wid{\Phi}_s(x,y)}{\pa y_i\pa y_j}=s^2Q_{2,ij}^s+sQ_{1,ij}^s+Q_{0,ij}^s,
\en
where
\ben
&&Q_{2,ij}=\frac{\sqrt{\vep\mu}(\wid{x}_j-y_j)}{\rho_s(\wid{x},y)/s}P_{1,i}^s,\\
&&Q_{1,ij}=\frac{(\wid{x}_j-y_j)}{[\rho_s(\wid{x},y)/s]^2}P_{1,i}^s
+\frac{\sqrt{\vep\mu}(\wid{x}_j-y_j)}{\rho_s(\wid{x},y)/s}P_{0,i}^s
+\frac{\sqrt{\vep\mu}[(\wid{x}_i-y_i)(\wid{x}_j-y_j)
-\delta_{i,j}\rho_s(\wid{x},y)/s]}{[\rho_s(\wid{x},y)/s]^2}\wid{\Phi}_s,\\
&&Q_{0,ij}=\frac{(\wid{x}_j-y_j)}{[\rho_s(\wid{x},y)/s]^2}P_{0,i}^s
+\frac{2(\wid{x}_i-y_i)(\wid{x}_j-y_j)
-\delta_{i,j}[\rho_s(\wid{x},y)/s]^2}{[\rho_s(\wid{x},y)/s]^4}\wid{\Phi}_s,
\enn
where $\delta_{i,j}$ denotes the Kronecker symbol. By (\ref{4.2}) and (\ref{4.2c}) it follows that
\be\label{4.2c++}
|Q_{l,ij}^s|\les(1+s_1^{-1}\sigma_0)^2d^{-1}e^{-\sqrt{\vep\mu}\rho\hat{\sigma}(\rho)},\;\;\;l=0,1,2.
\en
This, together with (\ref{4.2}), (\ref{4.2c+}) and the definition of $\wid{\mathbb{G}}$
in (\ref{3.3a}), implies (\ref{4.1}).

By noting the fact that the $\curl$ of the Hessian is zero, we know that the $\curl$ of the dyadic
Green function $\wid{\mathbb{G}}(s,x,y)$ only includes the $\curl$ of $\Phi_s(x,y)\mathbb{I}$ .
Thus (\ref{4.1a}) and (\ref{4.1b}) follow from (\ref{4.2b})-(\ref{4.2c}) and
(\ref{4.2c+})-(\ref{4.2c++}), respectively.

To prove (\ref{4.1c}), we also need the estimates for the third-order derivatives of $\wid{\Phi}_s$.
First we have
\ben
-\frac{\pa^3\wid{\Phi}_s(x,y)}{\pa\wid{x}_i\pa y_j\pa y_k}
=\frac{\pa^3\wid{\Phi}_s(x,y)}{\pa y_i\pa y_j\pa y_k}
=s^3R_{3,ijk}^s+s^2R_{2,ijk}^s+sR_{1,ijk}^s+R_{0,ijk}^s,
\enn
where, by a direct calculation, we can prove similarly as above that
\ben
|R_{l,ijk}^s|\les(1+s_1^{-1}\sigma_0)^3d^{-1}e^{-\sqrt{\vep\mu}\rho\hat{\sigma}(\rho)},\;\;\;l=0,1,2,3.
\enn
This, together with the definition of $\wid{\mathbb{G}}$ and the fact that $\curl$ of the Hessian is zero,
yields (\ref{4.1c}). The proof is thus complete.
\end{proof}

\begin{theorem}\label{thm5.4}
For any $\bm p\in H^{-1/2}(\Dive,\G_R)$ and $\bm q\in H^{-1/2}(\Dive,\G_R)$,
let $\mathbb{E}(\bm p,\bm q)$ be the PML extension in the $s$-domain defined in $(\ref{3.4})$.
Then, for any $x\in\Om^{\PML}$ we have
\be\label{4.2d}
&&|\mathbb{E}(\bm p,\bm q)(x)|\\ \no
&&\les s_1^{-2}d^{1/2}(1+s_1^{-1}\sigma_0)^2e^{-\sqrt{\vep\mu}\rho\hat{\sigma}(\rho)}
\left[(1+|s|)\|\bm q\|_{H^{-1/2}(\Dive,\G_R)}
+(1+|s|^2)\|\bm p\|_{H^{-1/2}(\Dive,\G_R)}\right]
\en
and
\be\label{4.2e}
&&|\curl_{\wid{x}}\;\mathbb{E}(\bm p,\bm q)(x)|\\ \no
&&\les d^{1/2}(1+s_1^{-1}\sigma_0)^3e^{-\sqrt{\vep\mu}\rho\hat{\sigma}(\rho)}
\left[(1+|s|^2)\|\bm q\|_{H^{-1/2}(\Dive,\G_R)}
+(1+|s|^3)\|\bm p\|_{H^{-1/2}(\Dive,\G_R)}
\right].
\en
\end{theorem}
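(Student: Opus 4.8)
The plan is to estimate the two layer-potential integrals that define $\mathbb{E}(\bm p,\bm q)=-\wid{\bm\Psi}_{\rm SL}(\bm q)-\wid{\bm\Psi}_{\rm DL}(\bm p)$ pointwise for $x\in\Om^{\PML}$, $y\in\G_R$, using the kernel bounds from Lemma \ref{lem4.3}. The key observation is that by Lemma \ref{lem4.2} the exponential factor $e^{-\sqrt{\vep\mu}\rho\hat\sigma(\rho)}$ in those bounds is valid whenever $x\in\G_\rho$ and $y\in\G_R$; since the decay of $\wid\Phi_s$ is monotone in the stretched radius, the same (or a better) bound holds for any $x$ in the layer $R<|x|<\rho$ with $y\in\G_R$, so Lemma \ref{lem4.3} applies throughout $\Om^{\PML}$. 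With those kernel estimates in hand, the pointwise value $|\mathbb{E}(\bm p,\bm q)(x)|$ is bounded by integrating $|\wid{\mathbb G}(s,x,y)|\,|\bm q(y)|$ and $|\curl_y\wid{\mathbb G}(s,x,y)|\,|\bm p(y)|$ over $\G_R$.

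First I would handle the duality pairing: the surface integral $\int_{\G_R}\wid{\mathbb G}^T(s,x,y)\bm q(y)\,d\g(y)$ is, for fixed $x$, a pairing of the smooth kernel $\wid{\mathbb G}(s,x,\cdot)$ against $\bm q\in H^{-1/2}(\Dive,\G_R)$. Bounding it therefore requires controlling the kernel in the dual norm $H^{-1/2}(\Curl,\G_R)$ (more precisely, the relevant $H^{1/2}$-type tangential norm) rather than merely in $L^\infty$ or $L^2$. Since $\wid{\mathbb G}(s,x,\cdot)$ is real-analytic in $y$ on $\G_R$ for $x$ bounded away from $\G_R$, one estimates both it and its first tangential derivative in $y$ by the pointwise kernel bounds of Lemma \ref{lem4.3}: $|\wid{\mathbb G}|$ and $|\curl_y\wid{\mathbb G}|$ for the $H^{1/2}$-norm controlling $\wid{\bm\Psi}_{\rm SL}$, and $|\curl_y\wid{\mathbb G}|$ together with $|\curl_y\curl_y\wid{\mathbb G}|$ for the one controlling $\wid{\bm\Psi}_{\rm DL}$. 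Multiplying by the surface measure of $\G_R$, which is a fixed constant, produces the factor $d^{1/2}$ claimed — here I expect the $d^{1/2}$ (rather than $d$) to come from using an $L^2(\G_R)$-type bound on the kernel combined with the duality, i.e. $|\langle K,\bm q\rangle|\les\|K\|_{H^{1/2}}\|\bm q\|_{H^{-1/2}}$ and $\|K\|_{H^{1/2}(\G_R)}\les |\G_R|^{1/2}\sup(|K|+|\nabla_{\G_R}K|)$.

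Assembling the pieces: for \eqref{4.2d}, the single-layer term contributes, via \eqref{4.1} and \eqref{4.1a}, a bound $\les d^{1/2}\,s_1^{-2}(1+s_1^{-1}\sigma_0)^2(1+|s|)\,e^{-\sqrt{\vep\mu}\rho\hat\sigma(\rho)}\|\bm q\|_{H^{-1/2}(\Dive,\G_R)}$ (the $(1+|s|)$ absorbing the $\curl_y$-derivative factor and the worse $s$-power from \eqref{4.1a} being dominated after factoring out $s_1^{-2}(1+s_1^{-1}\sigma_0)^2$); the double-layer term contributes, via \eqref{4.1a} and \eqref{4.1b}, the $(1+|s|^2)\|\bm p\|_{H^{-1/2}(\Dive,\G_R)}$ piece with the same prefactor. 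For \eqref{4.2e} one applies $\curl_{\wid x}$ to the representation and uses \eqref{4.1a}, \eqref{4.1b}, \eqref{4.1c}: the single-layer part needs $|\curl_{\wid x}\wid{\mathbb G}|$ and $|\curl_{\wid x}\curl_y\wid{\mathbb G}|$, giving $(1+|s|^2)\|\bm q\|$, and the double-layer part needs $|\curl_{\wid x}\curl_y\wid{\mathbb G}|$ and $|\curl_{\wid x}\curl_y\curl_y\wid{\mathbb G}|$, giving $(1+|s|^3)\|\bm p\|$, each with prefactor $d^{1/2}(1+s_1^{-1}\sigma_0)^3 e^{-\sqrt{\vep\mu}\rho\hat\sigma(\rho)}$.

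The main obstacle is the duality step: one cannot simply bound $\int_{\G_R}|\wid{\mathbb G}|\,|\bm q|$ because $\bm q$ lives only in $H^{-1/2}(\Dive,\G_R)$, not in $L^1$. The careful point is to verify that the kernel $y\mapsto\wid{\mathbb G}(s,x,y)$ (and its tangential rotations) lies in the predual space $H^{1/2}$-class with a norm controlled by the sup of the pointwise bounds of Lemma \ref{lem4.3} plus the sup of one more $y$-tangential derivative — and that this extra derivative is already covered by the higher-order kernel estimates in that lemma, so no new computation is needed beyond bookkeeping of $s$-powers and $(1+s_1^{-1}\sigma_0)$-powers. A secondary point, worth a line, is the extension of Lemma \ref{lem4.3}'s exponential bound from $x\in\G_\rho$ to general $x\in\Om^{\PML}$, which follows because $\Rt[\rho_s(\wid x,y)]\ge s_1(\wid r-R)$ is increasing in $r=|x|$ and $\hat\sigma(r)r$ is increasing, so $\Rt[\rho_s]\ge r\hat\sigma(r)$ is at worst what one gets at $r=\rho$ only when one insists on the uniform bound; in fact the bound only improves for $r<\rho$, so the stated estimate with $e^{-\sqrt{\vep\mu}\rho\hat\sigma(\rho)}$ remains an upper bound uniformly on the layer.
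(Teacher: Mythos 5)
Your overall strategy is the same as the paper's: write $\mathbb{E}(\bm p,\bm q)$ as the two stretched layer potentials, estimate each by pairing the density in the $H^{-1/2}(\Dive,\G_R)$--$H^{-1/2}(\Curl,\G_R)$ duality, and feed in the kernel bounds of Lemma \ref{lem4.3}. The paper realizes the duality step slightly differently: it bounds $\|\g_T\wid{\mathbb{G}}(s,x,\cdot)\|_{H^{-1/2}(\Curl,\G_R)}$ by $\|\wid{\mathbb{G}}(s,x,\cdot)\|_{H(\curl,\Om_R)}$ through the trace theorem, so that only the bounds (\ref{4.1})--(\ref{4.1c}) on $\wid{\mathbb{G}}$ and its curls (with $y$ ranging over $\Om_R$) are used, whereas your surface route $\|K\|_{H^{1/2}(\G_R)}\les|\G_R|^{1/2}\sup(|K|+|\nabla_{\G_R}K|)$ needs full tangential gradients of $\wid{\mathbb{G}}$. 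Note that these are \emph{not} literally supplied by the statement of Lemma \ref{lem4.3}: the lemma only controls curls, and the curl of the Hessian block $k^{-2}\nabla_y\nabla_y\wid{\Phi}_s$ vanishes, so those estimates say nothing about its first derivatives; you would have to invoke the third-order derivative bounds ($R^s_{l,ijk}$) established inside the proof of the lemma. That is bookkeeping rather than a conceptual obstacle, and your assembly of the $s$-powers and $(1+s_1^{-1}\sigma_0)$-powers matches the paper's.

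The genuine gap is your justification for applying the exponentially small kernel bounds at \emph{every} $x\in\Om^{\PML}$. The monotonicity you invoke runs the wrong way: the argument of Lemma \ref{lem4.2} gives $\Rt[\rho_s(\wid x,y)]\ge s_1|\wid x-y|\ge s_1(\wid r-R)=s_1(r-R)+r\hat{\sigma}(r)$ with $r=|x|$, and since $r\hat{\sigma}(r)=\int_R^r\sigma(\tau)\,d\tau$, this lower bound \emph{decreases to zero} as $r\to R^+$. So for $x$ near the inner boundary the factor $e^{-\sqrt{\vep\mu}\Rt[\rho_s(\wid x,y)]}$ is close to $1$ and, worse, $|\wid x-y|$ can be arbitrarily small, so $|\wid{\mathbb{G}}(s,x,y)|$ blows up as $x$ approaches $\G_R$. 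Hence the claim that ``the bound only improves for $r<\rho$'' and that $e^{-\sqrt{\vep\mu}\rho\hat{\sigma}(\rho)}$ is a uniform upper bound over the layer is false; as written, your argument establishes the stated estimates only for $x$ on (or uniformly near) $\G_{\rho}$, not on all of $\Om^{\PML}$. To be fair, the paper's own proof is silent on this point (it applies Lemma \ref{lem4.3}, stated for $x\in\G_\rho$, $y\in\G_R$, directly to $x\in\Om^{\PML}$ and $y\in\Om_R$ without comment), but the defect cannot be repaired by the reversed inequality you propose: one must either restrict $x$ to an outer portion of the layer (which is what the convergence argument actually needs, since it enters through the trace on $\G_\rho$) or carry the $x$-dependent decay factor $e^{-\sqrt{\vep\mu}\,(s_1(r-R)+r\hat{\sigma}(r))}$ through the estimate.
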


\begin{proof}
Since $\g_T$ is a bounded operator, by Lemma \ref{lem4.3} we have
\ben
|\wid{\bm\Psi}_{{\rm SL}}(\bm q)(x)|&\le&\|\bm q\|_{H^{-1/2}(\Dive,\G_R)}\cdot
 \|\g_T\wid{\mathbb{G}}(s,x,\cdot)\|_{H^{-1/2}(\Curl,\G_R)}\\
&\les&\|\bm q\V_{H^{-1/2}(\Dive,\G_R)}\cdot\|\wid{\mathbb{G}}(s,x,\cdot)\|_{H(\curl,\Om_R)}\\
&\les& s_1^{-2}d^{1/2}(1+s_1^{-1}\sigma_0)^2e^{-\sqrt{\vep\mu}\rho\hat{\sigma}(\rho)}(1+|s|)
   \|\bm q\|_{H^{-1/2}(\Dive,\G_R)},\\
|\wid{\bm\Psi}_{{\rm DL}}(\bm p)(x)|&\le&\|\bm p\|_{H^{-1/2}(\Dive,\G_R)}\cdot
\|\g_T(\curl\;\wid{\mathbb{G}})(s,x,\cdot)\|_{H^{-1/2}(\Curl,\G_R)}\\
&\les& \|\bm p\|_{H^{-1/2}(\Dive,\G_R)}\cdot\|\curl\;\wid{\mathbb{G}}(s,x,\cdot)\|_{H(\curl,\Om_R)}\\
&\les& d^{1/2}(1+s_1^{-1}\sigma_0)^2e^{-\sqrt{\vep\mu}\rho\hat{\sigma}(\rho)}(1+|s|^2)
\|\bm p\|_{H^{-1/2}(\Dive,\G_R)}.
\enn
This together with (\ref{3.4}) gives (\ref{4.2d}).
The estimate (\ref{4.2e}) for $|\curl_{\wid{x}}\;\mathbb{E}(\bm p,\bm q)(x)|$
can be proved similarly. The proof is complete.
\end{proof}

We are now ready to prove the exponential convergence of the time-domain PML method,
as stated in the following theorem.

\begin{theorem}\label{thm5.5}
Let $(\bm E,\bm H)$ and $(\bm E^p,\bm H^p)$ be the solutions of the problems $(\ref{2.2})$
and $(\ref{tpml})$ with $s_1=1/T$, respectively.
If the assumptions $(\ref{assumption})$ and $(\ref{assumption1})$ are satisfied, then
\be\label{4.43}
&&\max_{0\le t\le T}(\|\bm E-\bm E^p\|_{L^2(\Om_R)^3}+\|\bm H-\bm H^p\|_{L^2(\Om_R)^3})\no\\
&&\qquad\qquad\les T^{9/2}d^2(1+\sigma_0T)^{9}e^{-\sigma_0d\sqrt{\vep\mu}/2}
\|\bm J\|_{H^7(0,T;L^2(\Om_R)^3)}.
\en
\end{theorem}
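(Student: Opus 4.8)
The plan is to reduce the time-domain error estimate to an $s$-domain estimate via the Parseval identity, then to exploit the exponential decay of the stretched dyadic Green's function from Lemma~\ref{lem4.3} and Theorem~\ref{thm5.4} together with the stability estimates of Theorem~\ref{thm3.2} and Theorem~\ref{thm3.3}. First I would work with the Laplace-transformed fields $\ch{\bm E},\ch{\bm E}^p$ and set $\ch{\bm w}:=\ch{\bm E}-\ch{\bm E}^p$ in $\Om_R$. The key is that, since $\bm J$ is supported in $B_R$, both the true solution (extended by the exterior integral representation) and the PML solution (extended by the stretched integral representation $\mathbb E(\cdot,\cdot)$ of $(\ref{3.4})$) satisfy the stretched Maxwell system $(\ref{3.4a})$ in $\Om^{\PML}$; the only difference between $\ch{\bm E}$ and $\ch{\bm E}^p$ lives in the boundary data they inherit on $\G_\rho$. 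Concretely, $\ch{\bm E}^p$ satisfies $\hat x\times\ch{\bm E}^p=\0$ on $\G_\rho$, whereas the stretched extension $\wid{\ch{\bm E}}$ of the true solution has nonzero but exponentially small tangential trace on $\G_\rho$. So I would let $\bm\xi:=-\hat x\times\wid{\ch{\bm E}}|_{\G_\rho}$ and observe that the difference $\wid{\ch{\bm E}}-\ch{\bm E}^p$ in $\Om^{\PML}$ solves the boundary value problem $(\ref{3.13})$ with this datum $\ch{\bm\xi}$.

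The next step is to estimate $\|\ch{\bm\xi}\|_{H^{-1/2}(\Dive,\G_\rho)}$. Here I would use Theorem~\ref{thm5.4}: the stretched extension is $\wid{\ch{\bm E}}=\mathbb E(\g_t\ch{\bm E},\g_t(\curl\,\ch{\bm E}))$, so $|\hat x\times\wid{\ch{\bm E}}(x)|$ for $x\in\G_\rho$ is controlled by the pointwise bound $(\ref{4.2d})$, which carries the crucial factor $e^{-\sqrt{\vep\mu}\rho\hat\sigma(\rho)}=e^{-\sigma_0 d\sqrt{\vep\mu}/(m+1)}$. Since this is a uniform pointwise bound on the compact surface $\G_\rho$, it dominates the $H^{-1/2}(\Dive,\G_\rho)$ norm (up to a factor depending on $\rho$, hence on $d$ by $(\ref{thick})$). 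The traces $\|\g_t\ch{\bm E}\|_{H^{-1/2}(\Dive,\G_R)}$ and $\|\g_t(\curl\,\ch{\bm E})\|_{H^{-1/2}(\Dive,\G_R)}$ appearing on the right of $(\ref{4.2d})$ are in turn bounded by $\|\ch{\bm E}\|_{H(\curl,\Om_R)}$ via the trace theorem, and then by the $s$-domain stability estimate for the original problem — i.e., the $s$-domain analogue of Theorem~\ref{thm3.2}, or directly the estimate behind $(\ref{2.2})$ — in terms of $\|s\ch{\bm J}\|_{L^2(\Om_R)^3}$. Feeding this into the stability bound $(\ref{3.16})$ for the PML-layer problem gives a pointwise-in-$s$ estimate for $\|\ch{\bm w}\|_{H(\curl,\Om_R)}$ (restricting the $\Om^{\PML}$ estimate to $\Om_R\subset\Om^{\PML}$, or rather using it on the layer and a Caccioppoli/energy argument, but since the difference solves $(\ref{3.13})$ on the whole layer and $\Om_R$ abuts $\G_R$ where $\hat x\times(\wid{\ch{\bm E}}-\ch{\bm E}^p)=\0$, the $\Om^{\PML}$ bound already controls things up to $\G_R$; a short additional argument propagates it into $\Om_R$ using that inside $\Om_R$ the two fields solve the \emph{same} equation with the \emph{same} source and boundary condition on $\G$, differing only through the TBC vs.\ the trace on $\G_R$).

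Having an $s$-domain estimate of the form $\|s\ch{\bm w}\|_{L^2(\Om_R)^3}+\|\curl\,\ch{\bm w}\|_{L^2(\Om_R)^3}\les P(|s|,s_1^{-1}\sigma_0)\,d^{?}\,e^{-\sigma_0 d\sqrt{\vep\mu}/(m+1)}\,\|s\ch{\bm J}\|_{L^2(\Om_R)^3}$ with $P$ a fixed polynomial (degree in $|s|$ dictated by the worst term, which is the third-order-derivative bound $(\ref{4.1c})$ in Lemma~\ref{lem4.3}, contributing $|s|^3$, plus the powers of $|s|$ from the Calderón/stability steps), I would then invoke the Parseval identity $(\ref{A.5})$ with $s_1=1/T$ to pass back to the time domain: $\int_0^T\|\bm w\|^2\,dt\le e^{2}\int_0^\infty e^{-2t/T}\|\bm w\|^2\,dt=\tfrac{e^2}{2\pi}\int_{-\infty}^\infty\|\ch{\bm w}(s)\|^2\,ds_2$, and the polynomial factors $|s|^k$ are absorbed by putting $k$ extra time derivatives on $\bm J$ — this is exactly why the assumption $(\ref{assumption})$ demands $\bm J\in H^7$ and $\pa_t^j\bm J|_{t=0}=0$ for $j\le 6$, so that $|s|^7\ch{\bm J}=\mathscr L(\pa_t^7\bm J)$. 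The powers of $s_1^{-1}=T$ and of $\sigma_0 T$ combine into the prefactor $T^{9/2}d^2(1+\sigma_0 T)^9$; tracking them carefully is the bookkeeping heart of the argument. Finally a Sobolev embedding in $t$ (from the $L^2(0,T)$ bound on $\bm w$ and on $\pa_t\bm w$, obtained by also estimating $\|s^2\ch{\bm w}\|$) upgrades the $L^2$-in-time estimate to the $\max_{0\le t\le T}$ estimate, and the $\bm H$-part follows from the $\bm E$-part via the Maxwell relations in $(\ref{tpml})$ and $(\ref{2.2})$, as in Theorem~\ref{thm3.3}.

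The main obstacle I anticipate is \emph{not} the decay factor — that is handed to us cleanly by Lemma~\ref{lem4.3} — but rather (i) justifying rigorously that the PML solution's stretched extension and the true solution's stretched extension both solve the layer problem $(\ref{3.13})$ so that their difference is governed purely by the $\G_\rho$-trace, including the matching on $\G_R$ where $\al=\beta=1$; and (ii) keeping the polynomial-in-$|s|$ degree under control so that seven time derivatives on $\bm J$ genuinely suffice — one has to be careful that the stability estimate $(\ref{3.16})$ already costs $|s|(1+|s|)\sim|s|^2$, the extension bound $(\ref{4.2d})$ costs up to $1+|s|^2$, the original-problem $s$-domain stability costs powers of $|s|$ and $s_1$, and the trace of $\curl\,\ch{\bm E}$ needed in the double-layer potential effectively costs another derivative; these must sum to at most $7$. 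Choosing $m=1$ in $(\ref{3.1})$ makes $\hat\sigma(\rho)=\sigma_0 d/(2\rho)\ge\sigma_0/(2C_0)$ and, after absorbing the $d$-dependence of $e^{-\sigma_0 d\sqrt{\vep\mu}/2}$ type, yields the stated exponent $\sigma_0 d\sqrt{\vep\mu}/2$; verifying that the algebraic factor $T^{9/2}d^2(1+\sigma_0T)^9$ is exactly what falls out is the final piece of bookkeeping.
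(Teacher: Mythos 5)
Your central decomposition has a genuine gap. You extend the \emph{true} solution into the layer via $\ch{\wid{\bm E}}=\mathbb{E}(\g_t\ch{\bm E},\g_t(\curl\,\ch{\bm E}))$ and claim that the difference $\ch{\wid{\bm E}}-\ch{\bm E}^p$ (more precisely $B\ch{\wid{\bm E}}-\ch{\bm E}^p$) solves the layer problem (\ref{3.13}) with datum only on $\G_\rho$. But (\ref{3.13}) requires the homogeneous condition $\hat{x}\times\ch{\bm u}=\0$ on $\G_R$, and your difference does not satisfy it: on $\G_R$ the extension has trace $\g_t\ch{\bm E}$, while the truncated PML solution has trace $\g_t\ch{\bm E}^p$, so the trace of the difference on $\G_R$ is exactly the (unknown) error you are trying to bound. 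Consequently Theorem \ref{thm3.3} / estimate (\ref{3.16}) cannot be applied to this difference, and the subsequent "short additional argument propagates it into $\Om_R$" is not short --- it is the crux of the proof: inside $\Om_R$ the two fields differ precisely through the mismatch between the exact TBC and the boundary behavior of $\bm E^p,\bm H^p$ on $\G_R$, and quantifying that mismatch is the whole difficulty. Your anticipated obstacle (i) touches only the continuity of the extension across $\G_R$ (where $\al=\bt=1$), not this boundary-condition mismatch, which is where the argument as written breaks down.

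The paper resolves this by extending the \emph{truncated PML} solution instead: $\ch{\wid{\bm E}^p}=\mathbb{E}(\g_t\ch{\bm E}^p,\g_t(\wid{\curl}\,\ch{\wid{\bm E}^p}))$, so that $(\bm E^p-B\wid{\bm E}^p,\bm H^p-B\wid{\bm H}^p)$ genuinely vanishes on $\G_R$ and solves (\ref{3.12}) with the exponentially small datum $\bm\xi=\g_t(B\wid{\bm E}^p)|_{\G_\rho}$ (Theorem \ref{thm5.4} applied to the traces of $\ch{\bm E}^p$, which are controlled by Lemma \ref{lem3.1} in terms of $\ch{\bm J}$ --- no $s$-domain stability of the original untruncated problem is needed). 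The coupling back into $\Om_R$ is then handled by a time-domain energy/duality argument: the error in $\Om_R$ is bounded in the dual norm $Y(0,T;\G_R)$ by $\g_t\bm H^p-\mathscr{T}[\g_T\bm E^p]$ using the coercivity of $\mathscr{T}$ (Lemma \ref{lem2.2}), and the key identity (\ref{tetm}), $\mathscr{T}[\g_T\bm E^p]=\g_t(B\wid{\bm H}^p)$ on $\G_R$, proved by spherical harmonic expansion, converts this mismatch into the layer quantity $\g_t(\bm H^p-B\wid{\bm H}^p)$ to which Theorem \ref{thm3.3} applies; the $L^\infty$-in-time bound then comes directly from the energy identity rather than a Sobolev embedding in $t$. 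Your sketch could in principle be repaired by comparing $\ch{\bm E}^p$ with the extension of the true solution over the whole of $\Om_\rho$ (where the only inhomogeneous datum is on $\G_\rho$) and invoking the coercivity of $a_p$ there, but that is a different argument from the one you wrote, and it would additionally require an $s$-domain stability estimate for the original scattering problem and a careful recount of the powers of $|s|$ to stay within the $H^7$ budget.
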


\begin{proof}
By (\ref{2.2}) and (\ref{tpml}) it follows that
\be\label{4.3}
&&\nabla\times(\bm{E}-\bm E^p)+\mu\frac{\pa(\bm{H}-\bm H^p)}{\pa t}
   =\0\;\;\;\;\gin\;\;\;\Om_R\times(0,T),\\ \label{4.4}
&&\nabla\times(\bm{H}-\bm H^p)-\vep\frac{\pa(\bm{E}-\bm E^p)}{\pa t}
   =\0\;\;\;\;\gin\;\;\;\Om_R\times(0,T).
\en
Multiplying both sides of (\ref{4.4}) by the complex conjugate of $\bm V\in H_{\G}(\curl,\Om_R)$
and integrating by parts, we obtain
\be\label{4.4a}
(\bm{H}-\bm H^p,\nabla\times\bm V)_{\Om_R}-\vep(\pa_t(\bm{E}-\bm E^p),\bm V)_{\Om_R}
-\langle\g_t(\bm{H}-\bm H^p),\g_T\bm V\rangle_{\G_R}=0.\;\;\quad
\en
Define
\be\label{4.4b}
\bm u:=\bm{E}-\bm E^p,\;\;\;\;\bm u^*:=\int_0^t\bm ud\tau.
\en
Taking $\bm V=\bm u$ in (\ref{4.4a}) and using (\ref{4.3}) and the TBC (\ref{tbc}), we obtain
\be\no
&&\mu(\bm{H}-\bm H^p,\pa_t(\bm H-\bm H^p))_{\Om_R}+\vep(\pa_t\bm u,\bm u)_{\Om_R}
+\langle\mathscr{T}[\bm u_{\G_R}],\g_T\bm u\rangle_{\G_R}\\ \label{4.5}
&&\hspace{4cm}\qquad=\langle\g_t\bm H^p-\mathscr{T}[\g_T\bm E^p],\g_T\bm u\rangle_{\G_R}.
\en
Now, from (\ref{4.3}) it follows that $\nabla\times\bm u^*=-\mu(\bm{H}-\bm H^p)$.
Thus taking the real part of both sides of (\ref{4.5}) leads to
\be\no
&&\frac{1}{2}\frac{d}{dt}\left(\mu^{-1}\|\nabla\times\bm u^*\|^2_{L^2(\Om_R)^3}
+\vep\|\bm u\|^2_{L^2(\Om_R)^3}\right)
+\Rt\langle\mathscr{T}[\bm u_{\G_R}],\g_T\bm u\rangle_{\G_R}\\ \label{4.5a}
&&\qquad=\Rt\langle\g_t\bm H^p-\mathscr{T}[\g_T\bm E^p],\bm u\rangle_{\G_R}.
\en
Define the Banach space
\ben
X(0,T;\Om_R):=\left\{\bm v\in L^{\infty}(0,T;L^2(\Om_R)^3),\;
\bm v^*=\int_0^t\bm vd\tau\in L^{\infty}(0,T;H(\curl,\Om_R))\right\}
\enn
with the norm
\ben
\|\bm v\|_{X(0,T;\Om_R)}=\sup_{0\le t\le T}\left[\|\bm v\|^2_{L^2(\Om_R)^3}
+\|\nabla\times\bm v^*\|^2_{L^2(\Om_R)^3}\right]^{1/2}.
\enn
Define further the Banach space
\ben
Y(0,T;\G_R):=\left\{\bm\om:\int_0^T\langle\bm\om,\bm v\rangle_{\G_R}dt<\infty,\;\;
\forall\;\bm v\in X(0,T;\Om_R)\right\}
\enn
with the norm
\ben
\|\bm\om\|_{Y(0,T;\G_R)}=\sup_{\bm v\in X(0,T;\Om_R)}
\frac{\left|\int_0^T\langle\bm\om,\bm v\rangle_{\G_R}dt\right|}{\|\bm v\|_{X(0,T;\Om_R)}}.
\enn
By (\ref{4.5a}) and Lemma \ref{lem2.2} we get
\be\label{4.32}
\|\nabla\times\bm u^*\|_{L^2(\Om_R)^3}+\|\bm u\|_{L^2(\Om_R)^3}
\les\left\|\g_t\bm H^p-\mathscr{T}[\g_T\bm E^p]\right\|_{Y(0,T;\G_R)}.
\en

For $\ch{\bm E}^p|_{\G_R}$ define its PML extension $\ch{\wid{\bm E}^p}$ in the $s$-domain to be
the solution of the exterior problem
\ben
\begin{cases}
\wid{\na}\times[(\mu s)^{-1}\wid{\na}\times\bm u]+\vep s\bm u=\0&\gin\;\;\;\R^3\se\ov{B}_{R},\\
\hat{x}\times\bm u=\hat{x}\times\ch{\bm E}^p&\on\;\;\;\G_{R},\\
\ds\hat{x}\times(\mu s\bm{u}\times\hat{x})-\hat{x}\times(\wid{\na}\times\bm{u})
=o\left(\frac{1}{|\wid{x}|}\right) &\text{as}\;\;\;|\wid{x}|\rightarrow\infty.
\end{cases}
\enn
By \cite[Theorem 12.2]{Monk2003} it is easy to see that $\ch{\wid{\bm E}^p}$ satisfies
the integral representation
\be\label{4.33}
\ch{\wid{\bm E}^p}=\mathbb{E}(\g_t(\ch{\bm E}^p),\g_t(\wid{\curl}\;\ch{\wid{\bm E}^p})).
\en
Define $\ch{\wid{\bm H}^p}:=-(\mu s)^{-1}\wid{\curl}\ch{\wid{\bm E}^p}$.
Then $(\ch{\wid{\bm E}^p},\ch{\wid{\bm H}^p})$ satisfies the stretched Maxwell
equations in (\ref{3.4a}) in $\R^3\se\ov{B}_{R}$.
It's worth noting that $\ch{\wid{\bm H}^p}$ is not the extension of $\ch{\bm H^p}|_{\G_R}$.
Now let
\ben
\wid{\bm E}^p=\mathscr{L}^{-1}(\ch{\wid{\bm E}^p}),\;\;\;\;
\wid{\bm H}^p=\mathscr{L}^{-1}(\ch{\wid{\bm H}^p}).
\enn
Then $(\wid{\bm E}^p,\wid{\bm H}^p)$ satisfies the Maxwell equations in (\ref{3.4b})
in $\R^3\se\ov{B}_{R}\times(0,T)$. Further, we can claim that
\be\label{tetm}
\mathscr{T}[\g_T\bm E^p]=\g_t(B\wid{\bm H}^p)\;\;\;\on\;\;\;\G_R\times(0,T).
\en
In fact, the tangential component $\g_T\ch{\bm E^{p}}=(\hat{x}\times\ch{\bm E^{p}})\times\hat{x}|_{\G_R}$
of the solution $\ch{\bm E^{p}}$ on $\G_R$ can be represented in terms of the complete orthonormal basis $\{\bm{U}_n^m,\bm{V}_n^m, m=-n,...n, n=1,2,...\}$ of $L_t^2(\G_R)$ as
\ben
\g_T\ch{\bm E^{p}}=\sum_{n=1}^{\infty}\sum_{m=-n}^n\left[\wid{a_n^m}\bm U_n^m(\hat{x})
+\wid{b_n^m}\bm V_n^m(\hat{x})\right]\;\;\;\on\;\;\;\G_R.
\enn
where $\wid{a_n^m}$ and $\wid{b_n^m}$ depend only on $s,k,R$.
Then, by the definition of the EtM operator $\mathscr{B}$ in (\ref{setm}) we have
\be\label{B-EtM}
\mathscr{B}[\g_T\ch{\bm E^{p}}]=-\sum_{n=1}^{\infty}\sum_{m=-n}^n
\left[\frac{\vep sR\wid{a_n^m}h_n^{(1)}(kR)}{z_n^{(1)}(kR)}\bm U_n^m
+\frac{\wid{b_n^m}z_n^{(1)}(kR)}{\mu sRh_n^{(1)}(kR)}\bm V_n^m\right].
\en
On the other hand, $(\ch{\wid{\bm E}^p},\ch{\wid{\bm H}^p})$ satisfies the exterior problem
\ben
&&\wid{\na}\times\ch{\wid{\bm E}^p}+\mu s\ch{\wid{\bm H}^p}=\0\;\;\;\gin\;\;\;\R^3\se\ov{B}_R,\\
&&\wid{\na}\ti \ch{\wid{\bm H}^p}-\vep s\ch{\wid{\bm E}^p}=\0\;\;\;\gin\;\;\;\R^3\se\ov{B}_R,\\
&&\hat{x}\times\ch{\wid{\bm E}^p}=\hat{x}\times\ch{\bm E^{p}}\;\;\;\on\;\;\;\G_R,\\
&&\ds\hat{x}\times(\ch{\wid{\bm E}^p}\times\hat{x})+\hat{x}\times\ch{\wid{\bm{H}}^p}
=o\left(\frac{1}{|\wid{x}|}\right)\;\;\;\;\text{as}\;\;\;|\wid{x}|\rightarrow\infty.
\enn
The solution of this exterior problem is given by
\ben
\ch{\wid{\bm E}^p}(\wid{r},\hat{x})&=&\sum_{n=1}^{\infty}\sum_{m=-n}^n
\left[\frac{R\wid{a_n^m}z_n^{(1)}(k\wid{r})}{\wid{r}z_n^{(1)}(kR)}\bm U_n^m
+\frac{\wid{b_n^m}h_n^{(1)}(k\wid{r})}{h_n^{(1)}(kR)}\bm V_n^m\right.\\
&&\qquad\qquad\qquad\left.+\frac{R\wid{a_n^m}\sqrt{n(n+1)}h_n^{(1)}(k\wid{r})}{\wid{r}z_n^{(1)}(kR)}
  Y_n^m\hat{x}\right],\\
\ch{\wid{\bm H}^p}(\wid{r},\hat{x})&=&\sum_{n=1}^{\infty}\sum_{m=-n}^n
\left[\frac{\wid{b_n^m}z_n^{(1)}(k\wid{r})}{\mu s\wid{r}h_n^{(1)}(kR)}\bm U_n^m
-\frac{\vep sR\wid{a_n^m}h_n^{(1)}(k\wid{r})}{z_n^{(1)}(kR)}\bm V_n^m\right.\\
&&\qquad\qquad\qquad\left.-\frac{\wid{b_n^m}\sqrt{n(n+1)}h_n^{(1)}(k\wid{r})}{\mu s\wid{r}h_n^{(1)}(kR)}
  Y_n^m\hat{x}\right]
\enn
for $r\geq R$. Since $\wid{r}=R,\,B=I$ on $\G_R$, we have
\be\no
\g_t(B\ch{\wid{\bm H}^p})&=&B\ch{\wid{\bm H}^p}\times\hat{x}\big|_{\G_R}\\ \label{B-EtM+}
&=&-\sum_{n=1}^{\infty}\sum_{m=-n}^n\left[\frac{\vep sR\wid{a_n^m}h_n^{(1)}(kR)}{z_n^{(1)}(kR)}\bm U_n^m
+\frac{\wid{b_n^m}z_n^{(1)}(kR)}{\mu sRh_n^{(1)}(kR)}\bm V_n^m\right],
\en
where we have used the fact that $\bm U_n^m\times\hat{x}=-\bm V_n^m$, $\bm V_n^m\times\hat{x}=\bm U_n^m$.
By (\ref{B-EtM}) and (\ref{B-EtM+}) we have $\mathscr{B}[\g_T\ch{\bm E^{p}}]=\g_t(B\ch{\wid{\bm H}^p})$
$\on\;\;\G_R$. Taking the inverse Laplace transform of this equation gives the desired equality (\ref{tetm}).

Now, by (\ref{4.32}) and (\ref{tetm}), and since any function $\bm v\in X(0,T;\Om_R)$ can be extended
into $\Om^{{\rm PML}}\times(0,T)$ (denoted again by $\bm v$) such that
\ben
\g_t\bm v=0\;\;\;\on\;\;\;\G_{\rho}\;\;\;\;{\rm and}\;\;\;\;
\|\bm v\|_{X(0,T;\Om^{{\rm PML}})}\le C\|\bm v\|_{X(0,T;\Om_R)},
\enn
it follows that
\be\label{4.33+}
\|\g_t\bm H^p-\mathscr{T}[\g_T\bm E^p]\|_{Y(0,T;\G_R)}
&=& \|\g_t(\bm H^p-B\wid{\bm H}^p)\|_{Y(0,T;\G_R)}\\ \no
&\le& C\sup_{\bm v\in X(0,T;\Om^{{\rm PML}})}
\frac{|\int_0^T\langle\g_t(\bm H^p-B\wid{\bm H}^p),\g_T\bm v\rangle_{\G_R}dt|}
{\|\bm v\|_{X(0,T;\Om^{{\rm PML}})}}.
\en
For any $\bm v\in X(0,T;\Om^{{\rm PML}})$ we have $\g_t\bm v=0$ on $\G_{\rho}$,
and so, integrating by parts gives
\be\no
\int_0^T\langle\g_t(\bm H^p-B\wid{\bm H}^p),\g_T\bm v\rangle_{\G_R}dt
&=&\int_0^T\left(\nabla\times(\bm H^p-B\wid{\bm H}^p),\bm v\right)_{\Om^{{\rm PML}}}dt\\ \label{4.33+1}
&&-\int_0^T\left((\bm H^p-B\wid{\bm H}^p),\nabla\times\bm v\right)_{\Om^{{\rm PML}}}dt.\qquad\;\;
\en
Now, for $\bm v\in X(0,T;\Om^{{\rm PML}})$ it follows by noting the definition of $\bm v^*$ that
\be\label{4.33a}
&&\int_0^T\left((\bm H^p-B\wid{\bm H}^p),\nabla\times\bm v\right)_{\Om^{{\rm PML}}}dt\\ \no
&&\quad=\left((\bm H^p-B\wid{\bm H}^p),\nabla\times\bm v^*\right)_{\Om^{{\rm PML}}}\Big|_{t=T}
-\int_0^T\left(\pa_t(\bm H^p-B\wid{\bm H}^p),\nabla\times\bm v^*\right)_{\Om^{{\rm PML}}}dt.
\en
By the initial condition of $\bm H^p$ and $\wid{\bm H}^p$ we know that
$(\bm H^p-B\wid{\bm H}^p)\big|_{t=0}=0$, and thus
\ben
\left((\bm H^p-B\wid{\bm H}^p),\nabla\times\bm v^*\right)_{\Om^{{\rm PML}}}\Big|_{t=T}
=\left(\int_0^T\pa_t(\bm H^p-B\wid{\bm H}^p)dt,\nabla\times\bm v^*\big|_{t=T}\right)_{\Om^{{\rm PML}}}.
\enn
Combining this and (\ref{4.33a}) implies that
\be\no
&&\left|\int_0^T\left((\bm H^p-B\wid{\bm H}^p),\nabla\times\bm v\right)_{\Om^{{\rm PML}}}dt\right|\\
&&\quad\le 2\max_{0\le t\le T}\|\nabla\times\bm v^*\|_{L^2(\Om^{{\rm PML}})}
\int_0^T\left\|\pa_t(\bm H^p-B\wid{\bm H}^p)\right\|_{L^2(\Om^{{\rm PML}})}dt.\label{4.33+2}
\en
Using (\ref{4.33+}), (\ref{4.33+1}) and (\ref{4.33+2}) gives
\ben\label{4.42}
&&\|\g_t\bm H^p-\mathscr{T}[\g_T\bm E^p]\|_{Y(0,T;\G_R)}\\ \no
&&\quad\les\int_0^T\|\nabla\times(\bm H^p-B\wid{\bm H}^p)\|_{L^2({\Om^{{\rm PML}}})}dt
+\int_0^T\|\pa_t(\bm H^p-B\wid{\bm H}^p)\|_{L^2({\Om^{{\rm PML}}})}dt.
\enn
This together with (\ref{4.32}) leads to
\ben
&&\sup_{0\le t\le T}\left(\|\nabla\times\bm u^*\|_{L^2(\Om_R)^3}+\|\bm u\|_{L^2(\Om_R)^3}\right)\\
&&\qquad\les\int_0^T\|\nabla\times(\bm H^p-B\wid{\bm H}^p)\|_{L^2({\Om^{{\rm PML}}})}dt
+\int_0^T\|\pa_t(\bm H^p-B\wid{\bm H}^p)\|_{L^2({\Om^{{\rm PML}}})}dt.
\enn
Since $(\bm E^p-B\wid{\bm E}^p,\bm H^p-B\wid{\bm H}^p)$ satisfies the problem (\ref{3.12}) with
$\bm\xi=\g_t(B\wid{\bm E}^p|_{\G_{\rho}})$, it follows by (\ref{3.15b}) in Theorem \ref{thm3.3} that
\be\no
&&\sup_{0\le t\le T}\left(\|\nabla\times\bm u^*\|_{L^2(\Om_R)^3}+\|\bm u\|_{L^2(\Om_R)^3}\right)\\
&&\qquad\les(1+\sig_0T)^3T^{3/2}\|\g_t(B\wid{\bm E}^p)\|_{H^2(0,T;H^{-1/2}(\Dive,\G_\rho))}.\label{4.54}
\en
We now estimate the norm on the right-hand side of the inequality (\ref{4.54}).
By the boundedness of the trace operator $\g_t$ and the Parseval identity (\ref{A.5}) we have
\be\label{4.55}
\|\g_t(B\wid{\bm E}^p)\|^2_{H^2(0,T;H^{-1/2}(\Dive,\G_\rho))}
&\les&\|B\wid{\bm E}^p\|^2_{H^2(0,T;H(\curl,\;\Om^{\PML}))}\\ \no
&=&\int_0^T\left[\sum_{l=0}^2\|B\pa^l_t\wid{\bm E}^p\|^2_{H(\curl,\;\Om^{\PML})}\right]dt\\ \no
&\le& e^{2s_1T}\int_0^{\infty}e^{-2s_1t}
   \left[\sum_{l=0}^2\|B\pa^l_t\wid{\bm E}^p\|^2_{H(\curl,\;\Om^{\PML})}\right]dt\\ \no
&\les& e^{2s_1T}\left[1+\frac{\sigma_0}{s_1}\right]^4\int_{-\infty}^{\infty}
\left[\sum_{l=0}^2\|s^l\ch{\wid{\bm E}^p}\|^2_{H(\wid{\curl},\;\Om^{\PML})}\right]ds_2.
\en
By (\ref{4.33}), Theorem \ref{thm5.4} and the boundedness of $\g_T$ and $\g_t$ it is obtained that
\be\no
&&\sum_{l=0}^2\|s^l\ch{\wid{\bm E}^p}\|^2_{H(\wid{\curl},\;\Om^{\PML})}\\ \no
&&\qquad\quad\les s_1^{-4}d^4(1+s_1^{-1}\sigma_0)^{6}e^{-2\sqrt{\vep\mu}\rho\hat{\sigma}(\rho)}
\Big[(1+|s|^4)\sum_{l=0}^2\|s^l\g_t(\wid{\curl}\,\ch{\wid{\bm E}^p})\|^2_{H^{-1/2}(\Dive,\G_R)}\\ \no
&&\qquad\qquad+(1+|s|^6)\sum_{l=0}^2\|s^{l}\g_t(\ch{\bm E}^p)\|^2_{H^{-1/2}(\Dive,\G_R)}\Big]\\ \no
&&\qquad\quad\les s_1^{-4}d^4(1+s_1^{-1}\sigma_0)^{6}e^{-2\sqrt{\vep\mu}\rho\hat{\sigma}(\rho)}
\Big[(1+|s|^4)|s|^2(|s|^2+|s|^{-2})\\ \no
&&\qquad\qquad\sum_{l=0}^2\|s^l\g_T(\ch{\wid{\bm E}^p})\|^2_{H^{-1/2}(\Curl,\G_R)}+(1+|s|^6)
\sum_{l=0}^2\|s^{l}\g_t(\ch{\bm E}^p)\|^2_{H^{-1/2}(\Dive,\G_R)}\Big]\\ \no
&&\qquad\quad\les s_1^{-4}d^4(1+s_1^{-1}\sigma_0)^{6}e^{-2\sqrt{\vep\mu}\rho\hat{\sigma}(\rho)}
\sum_{l=0}^{6}\|s^l\ch{\bm E}^p\|^2_{H(\curl,\;\Om_R)}\\ \label{4.55a}
&&\qquad\quad\les s_1^{-6}d^4(1+s_1^{-1}\sigma_0)^{8}e^{-2\sqrt{\vep\mu}\rho\hat{\sigma}(\rho)}
  \sum_{l=0}^7\|s^{l}\ch{\bm J}\|^2_{L^2(\Om_R)^3},
\en
where we have used Lemma \ref{lem3.1} and the upper bound estimate (\ref{B-bound}) of
the EtM operator $\mathscr{B}$.
Combining (\ref{4.55}), (\ref{4.55a}) and the Parseval identity (\ref{A.5}), implies that
\be\no
&&\|\g_t(B\wid{\bm E}^p)\|^2_{H^2(0,T;H^{-1/2}(\Dive,\G_\rho))}\\ \no
&&\qquad\les e^{2s_1T}s_1^{-6}d^4(1+s_1^{-1}\sigma_0)^{12}e^{-2\sqrt{\vep\mu}\rho\hat{\sigma}(\rho)}
\sum_{l=0}^7\int_0^{\infty}e^{-2s_1t}\|\pa_t^{l}{\bm J}\|^2_{L^2(\Om_R)^3}dt\\ \label{4.55+}
&&\qquad\les e^{2s_1T}s_1^{-6}d^4(1+s_1^{-1}\sigma_0)^{12}e^{-2\sqrt{\vep\mu}\rho\hat{\sigma}(\rho)}
\|\bm J\|^2_{H^7(0,T;L^2(\Om_R)^3)},
\en
where we used the assumptions (\ref{assumption}) and (\ref{assumption1}) to get the last inequality.

Now, by (\ref{4.0}) we have
\ben
\rho\hat{\sig}(\rho)=\frac{\sig_0d}{m+1}.
\enn
It is obvious that $m$ should be chosen small enough to ensure the rapid convergence
(thus we need to take $m=1$). Since $s_1^{-1}=T$ in (\ref{4.55+}), and by using (\ref{4.54})
we obtain the required estimate (\ref{4.43}) on noting the definition (\ref{4.4b}) of $\bm u$
and $\bm u^*$ and the relation $\nabla\times\bm u^*=-\mu(\bm{H}-\bm H^p)$.
The proof is thus complete.
\end{proof}

\begin{remark}\label{re5} {\rm
Theorem \ref{thm5.5} implies that, for large $T$ the exponential convergence of the PML method
can be achieved by enlarging the thickness $d:=\rho-R$ or the PML absorbing parameter $\sigma_0$
which increases as $\ln T$.
}
\end{remark}

\section{Conclusions}\label{sec6}

In this paper, an effective PML method has been proposed in the three-dimensional spherical coordinates
for solving time-domain electromagnetic scattering problems,
based on the real coordinate stretching technique associated with $[\Rt(s)]^{-1}$ in the frequency domain.
The well-posedness and stability estimates of the truncated PML problem in the time domain
have been established by using the Laplace transform and energy method.
The exponential convergence of the PML method has also been proved in terms of the thickness and
absorbing parameters of the PML layer, based on the stability estimates of solutions
of the truncated PML problem and the exponential decay estimates of the stretched dyadic Green's function
for the Maxwell equations in the free space.

Our method can be extended to other electromagnetic scattering problems, such as scattering
by inhomogeneous media or bounded elastic bodies as well as scattering in a two-layered medium.
We hope to report such results in the future.

\section*{Acknowledgements}

This work was partly supported by the NNSF of China grants 91630309 and 11771349.
We thank the reviewers for their carefully reading the paper and for their constructive
and invaluable comments and suggestions, leading to improvements of the paper.

\end{document}